\newcommand{\R}{\mathbb{R}}
\newcommand{\N}{\mathbb{N}}
\newcommand{\Z}{\mathbb{Z}}
\newcommand{\e}{\epsilon}
\newcommand{\hc}{\mathcal{H}}
\newcommand{\vphi}{\varphi}
\DeclareMathOperator{\crit}{crit}
\DeclareMathOperator{\ind}{ind}
\newtheorem{thm}{Theorem}
\newtheorem*{thm*}{Theorem}
\newtheorem*{namedthm}{\namedthmname}
\newtheorem{lem}{Lemma}
\newtheorem*{lem*}{Lemma}
\newtheorem{prop}{Proposition}
\newtheorem*{coro*}{Corollary}
\theoremstyle{definition}
\newtheorem*{defi}{Definition}
\newtheorem*{rmk}{Remark}
\newcounter{namedthm}
\newenvironment{named}[1]
  {\def\namedthmname{#1}%
   \refstepcounter{namedthm}%
   \namedthm\def\@currentlabel{#1}}
  {\endnamedthm}
\title[Allen-Cahn flow connecting minimal surfaces in the 3-sphere]{Morse theory for the Allen-Cahn functional}
\author{Jingwen Chen\textsuperscript{1}, Pedro Gaspar\textsuperscript{2}}
\address{\parbox{\linewidth}{
\textsuperscript{1} Department of Mathematics, University of Pennsylvania, \\
David Rittenhouse Lab,
209 South 33rd Street,
Philadelphia, PA 19104 \\ \textsuperscript{2} Facultad de Matem\'aticas, Pontificia Universidad Cat\'olica de Chile \\ Avenida Vicuña Mackenna 4860, Santiago, Chile \smallskip}}
\email{jingwch@sas.upenn.edu, pedro.gaspar@mat.uc.cl}
\begin{document}

\begin{abstract}
In this article, we use Morse-theoretic techniques to construct connections between low energy critical submanifolds of the Allen-Cahn energy functional in the $3$-sphere via the negative gradient flow.
\end{abstract}

\maketitle
\section{Introduction}

Morse theory, which allows for the analysis of the topology of a manifold by studying differentiable functions on that manifold, was extensively investigated in infinite-dimensional settings by various mathematicians -- Smale \cite{smale1961gradient, smale1967differentiable}, Palais \cite{Palais}, Bott \cite{bott1982lectures, bott1988morse}, Conley-Zehnder \cite{conley1984morse}, Floer \cite{floer1989witten}, Chang\cite{chang1993infinite}, and many others. Abbondandolo-Majer \cite{abbondandolo2001morse} introduced a comprehensive Morse homology theory for functionals on a Hilbert space. Their approach revolves around the idea of examining the negative gradient flows of functionals that connect critical points, which are then utilized to construct the Morse complex. They also address a transversality condition between unstable and stable manifolds, a concept that has substantial applications in the fields of differential geometry, topology, and dynamical systems. Some applications of infinite-dimensional Morse theory include the construction of closed geodesics -- see the survey \cite{Geodesics} and the references therein -- and the solution to the triangulation conjecture in topology, by C. Manolescu \cite{Triangulation}.\medskip

Motivated by the achievements of Morse Theory and recognizing the deep connections between the Allen-Cahn equation and Mean Curvature Flow, we study Morse theory for the \emph{Allen-Cahn energy functional} in the $3$-sphere, namely:
\[E_{\epsilon}(u)= \int_{M} \left(\frac{\epsilon}{2} |\nabla_{g} u|^2 + \frac{1}{\epsilon} W(u)\right).\]
on the Hilbert space $W^{1,2}(S^3)$, aiming to understand the negative gradient flows that connect critical submanifolds of $E_{\e}$ at low energy levels. Here, $W$ represents a symmetric, nonnegative double-well potential with wells located at $\pm 1$, and a typical example is $W(u) = (1-u^2)^2/4$. These flows correspond to solutions of the \emph{parabolic Allen-Cahn equation}:
    \begin{equation} \label{PAC}
        \partial_t u = \Delta u - {\textstyle \frac{1}{\e^2}}W'(u) \tag{PAC}
    \end{equation}
that connect critical points of $E_\e$, which are stationary solutions of this equation. 

In this article, we study the dynamics of solutions to \eqref{PAC} in $S^3$ and demonstrate that there exist connections between solutions at the second lowest (nontrivial) energy level and solutions at the lowest energy level via a negative gradient flow of the Allen-Cahn functional.\medskip

\subsection*{Mean curvature flow and Morse theory for the area functional} The \emph{Mean Curvature Flow} (MCF) is one of the most studied extrinsic geometric flows and evolves hypersurfaces in their normal direction with a speed equal to the mean curvature at each point. It represents the steepest descent flow for the \emph{area functional} -- that is, the volume of codimension-one submanifolds -- and finds numerous profound applications in fields such as geometry, topology, material science, image processing, and general relativity. The simplest examples of MCFs in the Euclidean space $\R^n$ include the shrinking spheres and cylinders. However, there are comparatively few examples of ancient (and eternal, weak) MCFs in compact manifolds -- we mention here \cite{BIS,CM,HS,MS}.

In the last decades, there has been intense activity in the development of a Morse theory for the area functional using tools from Geometric Measure Theory connected to the existence of minimal hypersurfaces, which are the stationary solutions of the MCF, and with deep consequences in geometry and topology. This program was initiated by F. Almgren and J. Pitts \cite{Pitts} (see also Schoen-Simon \cite{Schoen-Simon}) and was further developed by F. Marques and A. Neves \cite{MarquesNevesWillmore,MarquesNevesIndex}, who proposed a Morse-theoretic program for minimal hypersurfaces. After the contribution of many mathematicians -- we name here Y. Liokumovich \cite{YMN}, A. Song \cite{Song}, and X. Zhou \cite{Zhou}, and refer to the survey \cite{MNMorse} for a more complete list of references -- this program established, for instance, the characterization of low area minimal surfaces in $S^3$ \cite{MarquesNevesWillmore} and the existence of infinitely many minimal hypersurfaces in compact Riemannian manifolds of dimension $3\leq n\leq 7$, with a concrete description of their Morse indexes and their area growth \cite{Song,MarquesNevesIndex}.\smallskip

\subsection*{Regularization by the Allen-Cahn energy} The Allen-Cahn equation serves as a model for phase transition and separation phenomena \cite{AC}. The motion of the diffuse transition region, where $u$ remains bounded away from the minima of $W$, has been extensively studied by several authors. They have established the convergence of this interface, as $\e \downarrow 0$, to a hypersurface that evolves according to the mean curvature flow, albeit in different formulations. In that sense, the parabolic equation \eqref{PAC} can be regarded as a regularization for the MCF. Prominent contributions in this regard include works by \cite{BronsardKohn, Chen, ChenGigaGoto, EvansSonerSouganidis, MottoniSchatzman, RSK, Soner} and their respective references.

The classical variational convergence for \emph{stationary} solutions of \eqref{PAC} was studied in the works of Modica and Mortola \cite{M85,MM}, who proved that the Allen–Cahn energy functional $\Gamma$-converges to the \emph{perimeter functional}, a generalization of the $(n-1)$-dimensional volume defined on the space of sets of finite perimeter. In particular, the interfaces of locally minimizing critical points of $E_\e$ (namely, the sets where these functions are bounded away from $\pm 1$) converge, as $\epsilon \downarrow 0$, to local minimizers of the perimeter and are thus regular away from a singular set of dimension $\leq (n-8)$. 

Later, a general convergence result for families of stationary solutions with uniformly bounded energy and index was developed in the combined work of J. Hutchinson, Y. Tonegawa, N. Wickramasekera, and M. Guaraco \cite{HT, TW, Guaraco}. This result is based on the deep regularity theory for minimal surfaces developed by Wickramasekera \cite{Wickramasekera}. In essence, a sequence of critical points ${u_j}$ of $E_\e$ with $\e = \e_j \downarrow 0$ and bounded energy and Morse index has a subsequence such that its nodal set accumulates on a minimal hypersurface in $M$ that is smooth and embedded away from a closed set of Hausdorff dimension $\leq (n-8)$. We also mention the strong convergence theory in $3$-dimensional manifolds developed by O. Chodosh and C. Mantoulidis \cite{ChodoshMantoulidis}, based on earlier work of K. Wang and J. Wei \cite{WangWei}, which establishes the graphical convergence of the nodal sets to the limit minimal surface under nondegeneracy conditions.

The convergence of solutions of \eqref{PAC} to measure-theoretic solutions of the Mean Curvature Flow (in the sense of Brakke \cite{Brakke}) has been explored by Ilmanen \cite{Ilmanen}, Soner \cite{Soner}, and more recently by Tonegawa \cite{T03}, Mizuno-Tonegawa \cite{MizunoTonegawa}, Pisante-Punzo \cite{PP}, and Sato \cite{Sato}. We also mention here the strong convergence results of Nguyen-Wang \cite{NW1} and the recent results of Fischer-Laux-Simon \cite{fischer2020convergence} based on relative entropy methods.

Regarding the existence of stationary solutions of \eqref{PAC}, this was studied through Morse-theoretic and variational methods in \cite{Smith}, where G. Smith employed Morse theory for the Allen-Cahn energy in H\"older spaces to produce an increasing number of solutions of \eqref{AC} as $\e \downarrow 0$. For a min-max approach, see \cite{Guaraco,GasparGuaraco}.\medskip

We now describe our main contribution. Recall that any least energy nonconstant critical point of the Allen-Cahn energy $E_{\e}$ in $S^3$ vanishes exactly along an equatorial sphere as demonstrated in \cite{CGGM}, where these solutions are called \emph{ground state solutions}. For small $\e$, the critical points of $E_\e$ at the second lowest nontrivial energy level is a solution that vanishes precisely along a Clifford torus. Let $u_\e^{-\infty}$ be a critical point of $E_\e$ in $S^3$ which vanishes precisely along $\{x \in S^3 \mid x_1^2 + x_2^2 = x_3^2 + x_4^2 = 1/2\}$. 

\begin{named}{Main Theorem} \label{main}
For any ground state solution $u_\e^{+\infty}$ of the Allen-Cahn equation, there is an eternal solution $u_\e \colon S^3 \times \R \to \R$ to \eqref{PAC} such that
    \[\| u_\e(t,\cdot) - u_\e^{\pm \infty} \|_{W^{1,2}(S^3)} \to 0, \quad \text{as} \quad t \to \pm \infty. \]\smallskip
\end{named}

Previously in \cite{CG2023}, we constructed a $2$-parameter family of eternal solutions to \eqref{PAC} that join $u^{-\infty}_\e$ to Allen-Cahn approximations of certain equatorial spheres. The Main Theorem of the present article can be seen as an extension of this result. We note nevertheless that the techniques used in this article are substantially different, in which we replace the symmetry argument used in \cite{CG2023} with a topological argument that allows us to obtain further information about the unstable manifold of $u^{-\infty}_\e$ and their forward limit, as we outline next.\smallskip

Due to the symmetry properties of the round sphere, the Allen-Cahn functional is a \emph{Morse-Bott function} along critical points with low energy, as elucidated by the rigidity results in \cite{CGGM, H}. The concept of Morse-Bott functions generalizes Morse functions and allows for non-isolated and degenerate critical points. Formally, a Morse–Bott function is a smooth function defined on a manifold, where the critical set forms a submanifold, and the Hessian remains nondegenerate in the normal direction.

In order to use infinite-dimensional Morse theory and gain deeper insights into the behavior of the Allen-Cahn energy function, we employ a general perturbation technique introduced in \cite{AB, BH}. In essence, we can perturb a Morse-Bott function in a tubular neighborhood of each critical submanifold to produce a one-parameter family of Morse functions. For example, consider the function $f: \R^3 \to \R$, where $f(x,y,z) = x^2 - y^2$. This function qualifies as a Morse-Bott function, with the $z$-axis serving as its critical submanifold. By introducing perturbations near the $z$-axis, we obtain a family of Morse functions given by $f_{\delta} = x^2 - y^2 + \delta z^2$.

Inspired by this argument, we perturb the energy functional $E_{\e}$ in a tubular neighborhood surrounding the low energy critical submanifolds in order to get a one-parameter family of Morse function with controlled energy and Morse indices. The perturbed Allen-Cahn equation is a nonlinear parabolic partial differential equation that can be regarded as an abstract evolution equation in certain function spaces. Our study explores the existence, regularity, continuity, and long-time existence properties of certain solutions to this equation, as described in \cite{Amann,Henry}.

Additionally, employing the Morse index and the structure of the unstable manifold of critical points of the perturbed functional as a key ingredient, we establish the existence of a negative gradient flow connecting any critical point at the second least positive energy level to any critical point at the least positive energy level. More precisely, we use tools from Morse Theory in Hilbert spaces \cite{Palais} and the variational min-max characterization of Clifford tori (as first obtained in \cite{MarquesNevesWillmore}), which allow us to develop a topological contradiction argument to produce an impossible \emph{$5$-sweepout}, in the sense of the min-max construction of critical points of the Allen-Cahn energy studied in \cite{GasparGuaraco}. 

\subsection*{Potential geometric applications.} In the proof of our main result, we develop a method to study the structure of the unstable manifold and stable manifold of critical points of the Allen-Cahn energy, with many potential geometric applications. Motivated by the Measure-theoretic convergence of the Allen-Cahn flow to the MCF, one of these applications would be to apply a method similar to the one that the authors presented in \cite{CG2023} to construct Brakke flows with prescribed asymptotic limits within a compact manifold, as we briefly describe below.


In our previous work \cite{CG2023}, we employed the convergence theory of solutions of \eqref{PAC} to Brakke flows to obtain a $2$-parameter family of eternal Brakke flows in $S^3$ which converge backward in time to the Clifford torus and forward in time to an equatorial sphere. These flows are smooth for large positive and negative times and can be thought of as a connection between the unstable manifold of the Clifford torus, as described in \cite{CM}, and the stable manifold of the equatorial spheres. We believe that the Allen-Cahn flows produced in the present work should give rise to the entire space of connections between these minimal surfaces through mean curvature flows. We will explore this application in upcoming work.

We also mention that the methods in the present article could be useful to study variational characterization results of low-energy or low-index solutions of the Allen-Cahn equation in other manifolds, similar to those obtained in \cite{CGGM,H}. This could potentially be used to describe certain min-max minimal hypersurfaces which are obtained as Allen-Cahn limits, motivated by the close connections between the variational theories for these functionals \cite{GasparGuaraco,Dey}.

\subsection*{Organization}
In Section \ref{sec:preliminaries}, we state some results concerning the Allen-Cahn equation, min-max theory, existence theory and dynamics for abstract parabolic differential equations, and the Morse-Bott theory on Hilbert spaces which will be used in the sequel. 

In Section \ref{sec: Heteroclinic orbits for a perturbed flow}, we present the Morse-Bott perturbation argument, examine heteroclinic orbits for the perturbed Allen-Cahn energy, and ultimately utilize these perturbed orbits to construct the desired orbit for the Allen-Cahn energy. 

In Section \ref{sec: topological argument}, we employ a topological argument to construct appropriate eternal solutions for the negative $L^2$-gradient flow of perturbations to the Allen-Cahn energy. This provides us with the orbits for the perturbed Allen-Cahn energy as utilized in the previous section.

\subsection*{Acknowledgements}

We would like to thank Andr\'e Neves for his continued support and for many invaluable discussions and suggestions. We also thank Davi Maximo for his interest in this work. PG was partially supported by ANID (Agencia Nacional de Investigaci\'on y Desarrollo) FONDECYT Iniciaci\'on grant.

\section{Preliminaries} \label{sec:preliminaries}

\subsection{Phase transitions} \label{subsec:doublewell}

This subsection collects some basic facts and results about the \emph{Allen-Cahn} equation. 

\begin{defi}
A function $W \in C^{\infty}(\R)$ is a \emph{(symmetric) double-well potential} if:
\begin{enumerate}
    \item[(1)] $W$ is nonnegative and vanishes precisely at $\pm1$;
    \item[(2)] $W$ satisfies $W'(0) = 0$, $W''(0) \neq 0$, and $sW'(s) < 0$ for $|s| \in (0, 1)$;
    \item[(3)] $W''(\pm1)>0$;
    \item[(4)] $W(s) = W(-s)$ for all $s \in \R$.
\end{enumerate}
\end{defi}

We will also assume that $W$ verifies the following technical \emph{quadratic growth} condition: there exist $C>0$ and $R_0>1$ such that

\begin{enumerate}
    \item[(Q1)] $|W(s)|\geq \frac{1}{C}|s|^2$ for all $|s| \geq R_0$;
    \item[(Q2)] $|W''|$ is bounded.
\end{enumerate}

The standard example of a double-well potential is the function $W(t) = \frac{1}{4}(1 - t^2)^2$. Even though this potential does not satisfy the condition (Q2) above, we observe that, by the maximum principle, any nonconstant classical solution of \eqref{PAC} in a compact manifold $M$ defined for $t\geq t_0$ and such that $|u(\cdot,t_0)|\leq 1$ satisfies $|u(\cdot,t)|<1$ for all $t >t_0$. This means, taking into account that our domains are compact manifolds, one can modify such potential $W(t)$ outside of $[-2,2]$ to obtain a new nonlinearity $\tilde W$ that satisfies (Q1)--(Q2) and such that any solution $u$ with $|u|\leq 1$ of \eqref{PAC} for the potential $\tilde W$ also satisfies \eqref{PAC} for the original double-well potential $W$. Compare with the modifications employed in \cite[Section 4]{Guaraco} in order to apply variational methods for the Allen-Cahn energy.


\begin{defi}
Let $(M^n,g)$ be a compact Riemannian manifold. We define the \emph{Allen-Cahn energy} on $M$ by:
\[E_{\epsilon}(u):= \int_{M} \left(\frac{\epsilon}{2} |\nabla_{g} u|^2 + \frac{1}{\epsilon} W(u)\right) d\mu_{g}, \ \ u \in W^{1,2}(M),\]
where $d\mu_g$ is the volume measure with respect to $g$. 
\end{defi}

We observe that the technical conditions above ensure that $E_\e$ is a $C^3$ functional in $W^{1,2}(M)$ for compact $3$-dimensional manifolds $M$, and that $E_\e$ satisfies the Palais-Smale condition along sequences that have bounded energy -- see \cite{RabinowitzMinmax}, \cite[Lemma 4.1]{CGGM}.\medskip

One can check that $u$ is a critical point of $E_{\epsilon}$ on a closed manifold $(M^n,g)$ if and only if $u$ (weakly) solves the \emph{elliptic Allen-Cahn equation}:
\begin{equation} \label{AC}
\Delta_g u -  {\textstyle \frac{1}{\e^2}}W'(u)=0 \quad \text{on} \ M. \tag{AC}
\end{equation}

We write $\sigma = \int_{-1}^1 \sqrt{W(t)/2} dt$. This is the energy of the \emph{heteroclinic solution} $\mathbb{H}_\e(t)$ of \eqref{AC} on $\R$, that is, the unique bounded solution in $\R$ (modulo translation) such that $\mathbb{H}_\e(t) \to \pm 1$ when $t \to \pm \infty$. We refer to \cite[Section 1.3]{ChodoshMantoulidis} for more information on this one-dimensional solution.

Recall that the \emph{Morse index} of a solution $u$ of \eqref{AC} (as a critical point of $E_\e$), denoted  $\ind_\e(u)$, is the index of the quadratic form given by the second variation of the energy $E_\e$ at $u$, namely
    \[D^2E_\e[u](\phi,\psi) := \int_M \e\langle \nabla \phi, \nabla \psi \rangle + \frac{1}{\e}W''(u)\phi\psi\ d\mu_g, \qquad \text{for} \quad \psi,\phi \in C^\infty(M).\]
Note that $\ind_\e(u)$ is the number of negative eigenvalues of the self-adjoint, second-order linear elliptic operator 
    \[DAC_\e(f) := \Delta f - \frac{W''(u)}{\e^2} f,\]
counted with multiplicity. In particular, $\ind_\e(u)$ is finite (note that we assumed $M$ to be compact). We also recall that $u$ is said to be a \emph{stable} solution if $\ind_\e(u)=0$.

\subsection{Min-max theory for even functionals} \label{subsec:minmax}

We briefly recall the definition of the cohomological $\Z_2$-index (for subsets of a Hilbert space), and the definition of the $p$-widths for an even functional on such spaces; see \cite{FR,GasparGuaraco}. Along this section, we consider a Hilbert space $\mathcal{H}$ and a $C^2$ even functional $f \colon \mathcal{H} \to \R$ that has the origin in $\mathcal{H}$ as an isolated critical point and that satisfies the Palais-Smale property along sequences $\{u_j\} \subset \mathcal{H}$ for which $\{f(u_j)\}$ is bounded. We also assume that $D^2f(u)$ is a Fredholm operator for every critical point $u \in \mathcal{H}$ of $f$.

We say that $A\subset \mathcal{H}$ is a \emph{symmetric} set if $-u \in A$ whenever $u \in A$. For every symmetric $A \subset \mathcal{H}$ with $0 \notin A$, denote by $q_A \colon A \to \tilde A$ the quotient map under the action of the antipodal map $u \in A \mapsto -u \in A$. If $S^\infty$ and $\R P^\infty$ denote the infinite dimensional sphere and real projective space (with the final topology given by the inclusions $S^n \to S^{n+1}$ and $\R P^n \to \R P^{n+1}$), then there exists a \emph{classifying map} for $A$, namely a continuous map $f_A\colon A \to S^\infty$ which is equivariant with respect to the antipodal maps, and hence induces a continuous $\tilde f_A\colon \tilde A \to \R P^\infty$ such that $\tilde f_A \circ q_A = \mathrm{pr} \circ f$, where $\mathrm{pr}\colon S^\infty \to \R P^\infty$ is the quotient map. We recall that the \emph{cohomological $\Z_2$ index of $A$} is defined as
    \[\ind_{\Z_2}(A) = \sup \{k \in \N \mid \tilde f_A^*(w^{k-1}) \neq 0 \in H^{k-1}(\tilde A,\Z_2)\},\]
where $w \in H^1(\R P^\infty,\Z_2)$ is the generator of the cohomology of $\R P^\infty$, and we use Alexander-Spanier cohomology groups with coefficients in $\Z_2$. Moreover, we adopt the convention that $\ind_{\Z_2}(\emptyset)=0$ and that $\ind_{\Z_2}(A) = \infty$ whenever $A$ is symmetric and $0 \in A$.

For each $p \in \N$, we consider the families
    \[\mathcal{F}_p = \{A \subset \mathcal{H} \mid A \ \text{is symmetric}, \ \text{and} \ \ind_{\Z_2}(A) \geq p+1\}.\]
For instance, the unit sphere in any $(p+1)$-dimensional subspace of $\mathcal{H}$ is in $\mathcal{F}_p$. We define the $p$-\emph{widths} of $f$ as
    \[\omega_p(f):= \inf_{A \in \mathcal{F}_p} \sup_{u \in A}f(u).\]
By using variational methods (see \cite{Ghoussoub}), one proves that $\omega_p(f)$ are critical values of $f$, and are achieved by critical points $u \in \mathcal{H}$ of $f$ such that $\ind(D^2f(u)) \leq p \leq \ind(D^2f(u)) + \dim\ker D^2f(u)$.

We also observe that if $f,g \colon \hc\to \R$ are even $C^2$ functionals, then
    \[\sup_{u \in A} f(u) \leq \sup_{u \in A}g(u) + \sup_{\hc}|f-g|\]
for any $A \in \mathcal{F}_p$, so that $\omega_p(f) \leq \omega_p(g) + \sup_{\hc}|f-g|$. The following lemma is a useful consequence of this observation:

\begin{lem} \label{lem:convwidth}
Let $\{f_\delta\}$ be a family of even $C^2$ functionals defined on a Hilbert space $\hc$ satisfying the conditions described above, for every small $\delta>0$. Assume that there exists an even $C^2$ functional $f \colon \hc \to \R$ which also satisfies the conditions above and such that $\sup_{H}|f-f_\delta| \to 0$, as $\delta \downarrow 0$. Then $\omega_p(f_\delta) \to \omega_p(f)$ as $\delta \downarrow 0$, for all $p$.
\end{lem}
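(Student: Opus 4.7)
The plan is to observe that this lemma is essentially an immediate consequence of the uniform estimate stated in the paragraph directly preceding it. The key point is that the family $\mathcal{F}_p$ of admissible competitors in the definition of $\omega_p$ depends only on the ambient Hilbert space $\hc$ and on the cohomological $\Z_2$-index of symmetric subsets, and \emph{not} on the specific functional under consideration. Thus both $\omega_p(f)$ and $\omega_p(f_\delta)$ are infima of sup's taken over the same collection of sets.

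Concretely, I would fix $p \in \N$ and write $\eta_\delta := \sup_{\hc}|f - f_\delta|$, which by hypothesis tends to $0$ as $\delta \downarrow 0$. For any $A \in \mathcal{F}_p$, the pointwise bound $f \leq f_\delta + \eta_\delta$ gives $\sup_A f \leq \sup_A f_\delta + \eta_\delta$, and taking the infimum over $A \in \mathcal{F}_p$ yields $\omega_p(f) \leq \omega_p(f_\delta) + \eta_\delta$. Exchanging the roles of $f$ and $f_\delta$ gives the reverse inequality $\omega_p(f_\delta) \leq \omega_p(f) + \eta_\delta$. Combining, one obtains
\[
|\omega_p(f_\delta) - \omega_p(f)| \leq \eta_\delta,
\]
and the conclusion follows by letting $\delta \downarrow 0$.

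There is no serious obstacle: the only thing one must be careful about is that the widths are genuinely well-defined for each member of the family, which is ensured by the standing hypotheses (Palais-Smale along bounded-energy sequences, Fredholm second variation at critical points, the origin as an isolated critical point) that are imposed on both $f$ and every $f_\delta$. Once that is in place, the argument is just the symmetric application of the inequality $\omega_p(f) \leq \omega_p(g) + \sup_\hc|f-g|$ already recorded in the paper.
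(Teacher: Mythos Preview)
Your proposal is correct and follows exactly the approach the paper intends: the paper explicitly presents the lemma as ``a useful consequence'' of the inequality $\omega_p(f) \leq \omega_p(g) + \sup_{\hc}|f-g|$ recorded just before it, and your symmetric application of that inequality is precisely what is meant.
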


We now focus on the min-max values $\omega_p(E_\e)$ for the Allen-Cahn energy functional $E_\e$. The convergence theory described in the introduction indicates that these critical values should converge to critical values for the \emph{area functional}, whose critical points are stationary surfaces for the codimension-one MCF, namely, minimal hypersurfaces. We recall that the variational theory for the latter was initiated by F. Almgren, J. Pitts \cite{Pitts}, and motivated many recent outstanding developments; see \cite{MarquesNevesWillmore,MarquesNevesIndex,Zhou,Song,MNMorse} and the references therein. These results explore the \emph{volume spectrum} of a manifold -- a min-max sequence of critical values for the area -- to prove a number of results about the existence and multiplicity of such surfaces.

The convergence of $\{\omega_p(E_\e)\}_p$ to the volume spectrum as $\e \downarrow 0$ was established in \cite{GasparGuaraco,Dey} (where $\omega_p(E_\e)$ are denoted by $c_\e(p)$) and motivates the search of classification results for solutions of \eqref{AC} from a geometric perspective, under curvature or symmetry assumptions. In the result below we summarize the classification of solutions at energy levels corresponding to the first min-max values $\{\omega_p(E_\e)\}_{p=1}^5$ in the round $3$-sphere which follows from the classification of low area stationary varifolds in $S^3$ \cite{MarquesNevesWillmore}, its phase-transitions analog -- the combined results of \cite{CGGM} and \cite{H} --, and the index bounds for limit interfaces \cite{GIndex,ChodoshMantoulidis} (see Lemma 1 in \cite{ChenTesis} for a proof):

\begin{thm} \label{lowenergy}
There exists $\e_0>0$ such that, for any $\e \in (0,\e_0)$, the following holds. Let $u$ be a nonconstant solution of \eqref{AC} with energy $E_\e(u) \leq \omega_5(E_\e)$. Then either
\begin{itemize}
    \item[(i)] If $E_\e(u_\e)<\omega_5(E_\e)$, then $u$ is a radially symmetric solution of Morse index 1, energy $E_\e(u) = \omega_1(E_\e) = \ldots = \omega_4(E_\e)$ and vanishes precisely on an equatorial sphere.
    \item[(ii)] If $E_\e(u_\e)=\omega_5(E_\e)$, then up to an isometry $u$ is a $\mathrm{SO}(2)\times \mathrm{SO}(2)$-symmetric solution of Morse index 5 and vanishes precisely on the Clifford torus $\{x \in S^3 \mid x_1^2 + x_2^2 = x_3^2 + x_4^2 = \frac{1}{2}\}$. 
\end{itemize}
\end{thm}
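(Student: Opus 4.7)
The plan is to combine four results from the literature: the convergence of the Allen-Cahn widths to the volume spectrum (Gaspar-Guaraco, Dey), the Marques-Neves characterization of low area stationary integral varifolds in $S^3$, the Hutchinson-Tonegawa-Wickramasekera-Guaraco varifold convergence for sequences of Allen-Cahn critical points with bounded energy and index, and the phase-transition rigidity statements of \cite{CGGM} and \cite{H}. First I would use the first two ingredients to pin down the relevant min-max values: since in $(S^3,g_{\mathrm{round}})$ the volume spectrum satisfies $\omega_p^{\mathrm{vol}} = 4\pi$ for $p = 1,\ldots,4$ (the area of an equatorial $S^2$) and $\omega_5^{\mathrm{vol}} = 2\pi^2$ (the area of the Clifford torus), Lemma \ref{lem:convwidth} gives $\omega_p(E_\e) \to 8\pi\sigma$ for $p \leq 4$ and $\omega_5(E_\e) \to 4\pi^2\sigma$ as $\e \downarrow 0$, and in particular $\omega_4(E_\e) < \omega_5(E_\e)$ for all $\e$ below some $\e_0$.

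Next I would argue by contradiction: suppose there is a sequence $\e_j \downarrow 0$ and nonconstant critical points $u_j$ of $E_{\e_j}$ with $E_{\e_j}(u_j) \leq \omega_5(E_{\e_j})$ violating the claim. The min-max characterization yields $\ind_{\e_j}(u_j) \leq 5$, and the energies are uniformly bounded, so the HTWG convergence theorem applies along a subsequence: the nodal sets of $u_j$ converge, as varifolds, to a smooth embedded minimal surface $\Sigma \subset S^3$ with $\mathcal{H}^2(\Sigma) \leq 2\pi^2$. By Marques-Neves, $\Sigma$ is, with multiplicity one, either an equatorial $S^2$ or the Clifford torus. In case (i), the strict inequality $E_{\e_j}(u_j) < \omega_5(E_{\e_j})$ rules out the Clifford torus, so $\Sigma$ is an equatorial sphere; the strong convergence theory of Wang-Wei and Chodosh-Mantoulidis then promotes varifold convergence to graphical convergence of the nodal set, and \cite{CGGM} identifies $u_j$ as the $\mathrm{O}(3)$-symmetric ground state, whose Morse index equals $1$ by matching the geometric lower bound $\ind(S^2_{\mathrm{eq}}) = 1 \leq \liminf \ind_{\e_j}(u_j)$ from \cite{GIndex, ChodoshMantoulidis} against the min-max upper bound. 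In case (ii), $\Sigma$ cannot be an equatorial sphere (otherwise the previous argument would force $E_{\e_j}(u_j) = \omega_1(E_{\e_j}) < \omega_5(E_{\e_j})$), so $\Sigma$ is the Clifford torus, and \cite{H} then pins $u_j$ down as the $\mathrm{SO}(2)\times\mathrm{SO}(2)$-symmetric solution; the Morse index comes out to $5 = \ind(T_{\mathrm{Cl}})$ by the same comparison between the min-max upper bound and the geometric lower bound on the index.

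The hard part will be matching the hypotheses of the rigidity statements \cite{CGGM,H} to the sequence at hand: beyond combining varifold convergence with graphical convergence of the nodal set via Chodosh-Mantoulidis, one must confirm that the index upper bound from the min-max characterization coincides exactly with the geometric lower bound coming from the limit surface, so that for large $j$ the functions $u_j$ lie squarely in the regime where the uniqueness theorems of \cite{CGGM,H} are available.
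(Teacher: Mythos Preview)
Your overall strategy matches the paper's outlined approach exactly: the paper does not give a self-contained proof but explicitly cites the same ingredients you list (Marques--Neves low-area classification, the phase-transition rigidity of \cite{CGGM} and \cite{H}, the index lower semicontinuity of \cite{GIndex,ChodoshMantoulidis}, and the convergence of widths from \cite{GasparGuaraco,Dey}), referring to Lemma~1 in \cite{ChenTesis} for the details. So your plan is on the right track.

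There is, however, one genuine gap. You write that ``the min-max characterization yields $\ind_{\e_j}(u_j)\leq 5$'' for your contradicting sequence. This is not correct: the min-max inequality $\ind \leq p \leq \ind+\nul$ applies only to the specific critical points \emph{produced by} the $p$-th min-max, not to an arbitrary nonconstant solution with energy $\leq \omega_5(E_\e)$. Your $u_j$ are assumed only to have $E_{\e_j}(u_j)\leq \omega_5(E_{\e_j})$, so a priori they could have large index, and you cannot invoke the Tonegawa--Wickramasekera/Guaraco regularity (which needs an index bound) to conclude the limit varifold is smooth. The same misuse reappears when you match the ``min-max upper bound'' against the geometric lower bound to pin down $\ind(u_j)=1$ or $5$; once you know $u_j$ is the symmetric ground state or torus solution, its index is computed directly in \cite{CGGM,H}, not via min-max.

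The fix is that the index bound is not actually needed at the step where you use it. By Hutchinson--Tonegawa the limit is a stationary integral $2$-varifold $V$ in $S^3$ with $\|V\|(S^3)\leq 2\pi^2$; the monotonicity formula in $S^3$ then forces $\Theta(V,x)\leq \|V\|(S^3)/4\pi<2$ everywhere, hence $\Theta\equiv 1$ on the support, and Allard regularity gives that $V$ is a smooth embedded minimal surface with multiplicity one. From there Marques--Neves applies, and the rigidity theorems of \cite{CGGM} (for sequences converging to an equator) and \cite[Corollary~5]{H} (for sequences converging to the Clifford torus) identify $u_j$ for large $j$. This is how the argument actually runs; once you replace the spurious min-max index bound by this density/Allard step, your outline is complete and agrees with the paper's.
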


The solutions of the first type are called \emph{ground state solutions}, as those are precisely the lowest energy nonconstant solutions (and also the lowest energy unstable solutions) of \eqref{AC}. We note that these solutions, as well as the solutions with symmetries of tori, can be constructed by direct minimization and reflection methods, see \cite{CGGM} for details.








\subsection{Existence theory and dynamics for a class of abstract differential equations} For the sake of completeness, we state some results about the existence and continuous dependence on initial conditions for solutions of a class of abstract parabolic partial differential equations on a compact Riemannian $3$-manifold $(M,g)$. We refer to the results of Sections 7 and 8 of \cite{Amann} (see also \cite{Henry}, for the case of semilinear equations, or \cite{Lunardi}). We will consider initial value problems associated with abstract quasilinear equations in fractional Sobolev spaces $X^\alpha=W^{2\alpha,2}(M)$, and write $X=L^2(M)$. We fix once and for all $\alpha_0 \in \left(\frac{3}{4},1\right)$ so that, by the Sobolev embedding for compact $3$-manifolds, the space $X^\alpha$ continuously embeds into $L^\infty(M) \cap W^{1,2}(M)$ for any $\alpha \in [\alpha_0,1)$.

\begin{rmk}
Here, we utilize the Sobolev embedding theorems for compact manifolds, as detailed in references such as \cite{Aubin} or \cite{LeeParker} (Theorem $2.2$). These theorems establish that, in a certain sense, the Sobolev inequality holds with a consistent constant across all compact manifolds $M$.
\end{rmk}

Consider the differential equation
    \begin{equation} \label{abstract}
        \frac{\mathrm{d}u}{\mathrm{d}t}-(1+\delta\theta(u))\Delta u = F(u,\delta),
    \end{equation}
where $F\colon W^{1,2}(M) \times [0,\delta_0) \to W^{1,2}(M)$ is a locally Lipschitz map, $\theta\colon W^{1,2}(M)\to \R$ is a perturbation term with bounded $\|\theta\|_{C^2(W^{1,2}(M))}$ and $\delta_0 >0$ is sufficiently small.


To state an existence result for solutions of the initial value problem associated to \eqref{abstract}, we note that, in general, such solutions are not well-defined backward in time. Nevertheless, under suitable conditions, these equations define a locally Lipschitz \emph{semi-flow} in an open subset of $\R \times X^\alpha$. This is the content of the following existence, regularity, and continuity results.


\begin{thm} \label{existence} \cite{Amann} Let $t_0 \in \R$ and $u_0 \in X^\alpha$. There exist $T=T(t_0,u_0)>0$ and a maximal solution $u \in C([t_0,t_0+T),X^\alpha) \cap C^1((t_0,t_0+T),X)$ of \eqref{abstract} with initial condition $u_0$ and which satisfies the following additional properties:
\begin{enumerate} 
    \item[(i)] For all $t \in (t_0,t_0+T)$, we have $u(t) \in X^1=W^{2,2}(M)$
    \item[(ii)] $u \in C^1((t_0,t_0+T), X) \cap C_{loc}^{0,\alpha-1/2}((t_0, t_0+T),W^{1,2}(M))$;
    \item[(iii)] If $\alpha' \in (\alpha, 1]$ and $u([t_1,t_0+T))$ is bounded in $X^{\alpha'}$ for some $t_1 \in (t_0,t_0+T)$, then the solution is global, namely $T=+\infty$.
\end{enumerate}

\end{thm}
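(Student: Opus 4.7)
The plan is to treat \eqref{abstract} as a quasilinear perturbation of a linear parabolic equation and invoke the abstract semigroup machinery for sectorial operators developed in \cite{Amann} (and by Henry/Lunardi in the semilinear case). First, I would verify that, for each $v$ in a bounded subset of $X^\alpha$, the operator $A(v) := -(1+\delta\theta(v))\Delta$ with domain $X^1 = W^{2,2}(M)$ is sectorial on $X = L^2(M)$, uniformly in $v$. This requires choosing $\delta_0$ small enough that $1+\delta\theta(v) \geq 1/2$ pointwise, and using the selfadjointness of $-\Delta$ together with the Sobolev embedding $X^\alpha \hookrightarrow L^\infty(M) \cap W^{1,2}(M)$ to control the multiplier. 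Consequently, the fractional power domains $D(A(v)^\beta)$ agree with $X^\beta$ up to equivalent norms, uniformly in $v$ in bounded subsets of $X^\alpha$.

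Next, I would establish short-time existence by a contraction mapping argument on mild solutions. Given an auxiliary curve $w \in C([t_0,t_0+T],X^\alpha)$ in a small ball around $u_0$, the frozen-coefficient problem $u' + A(w(t))u = F(w(t),\delta)$ with $u(t_0)=u_0$ is a non-autonomous linear equation whose evolution system $U_w(t,s)$ satisfies the parabolic smoothing estimate $\|U_w(t,s)\|_{X\to X^\beta} \leq C(t-s)^{-\beta}$. Combining this with the local Lipschitz dependence of $A(w)$ and $F(w,\delta)$ on $w$, one shows that the map sending $w$ to the mild solution
\[ u(t) := U_w(t,t_0)u_0 + \int_{t_0}^t U_w(t,s)F(w(s),\delta)\,ds\]
is a contraction on a small ball in $C([t_0,t_0+T],X^\alpha)$ for $T=T(\|u_0\|_{X^\alpha})>0$ small enough. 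Its fixed point is a mild solution; the parabolic smoothing $U_w(t,s)\colon X^\alpha \to X^1$ for $s<t$ then upgrades this to (i), while standard H\"older estimates of the form $\|U_w(t,s)u_0 - u_0\|_{X^\beta} \leq C(t-s)^{\alpha-\beta}\|u_0\|_{X^\alpha}$ (applied with $\beta=1/2$, so that $X^\beta = W^{1,2}(M)$) deliver the $C^1((t_0,t_0+T),X)$ and $C^{0,\alpha-1/2}_{loc}((t_0,t_0+T),W^{1,2}(M))$ regularity statements in (ii).

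Finally, for the blow-up criterion (iii), I would argue by contradiction. Suppose $T<\infty$ but $u([t_1,t_0+T))$ is bounded in $X^{\alpha'}$ with $\alpha'>\alpha$. Since $X^{\alpha'} \hookrightarrow X^\alpha$ compactly, one can extract a sequence $t_n \uparrow t_0+T$ such that $u(t_n)$ converges in $X^\alpha$ to some $u_*$. Applying the short-time existence result with initial time $t_n$ and initial datum $u(t_n)$ yields, by the continuous dependence of $T(\cdot,\cdot)$, a uniform lower bound $\tau>0$ on the existence time; choosing $t_n > t_0+T-\tau$ then extends the solution past $T$, contradicting maximality. The main obstacle throughout is the quasilinear character of the equation: verifying the uniform sectoriality and the $X^\alpha$-Lipschitz dependence of $v \mapsto A(v)$ in the operator norm from $X^1$ to $X$ required by Amann's framework. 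Both reduce to estimates for the multiplier $1+\delta\theta(v)$, which are available because $\theta$ is $C^2$ on $W^{1,2}(M)$ with bounded norm and $X^\alpha$ embeds into $L^\infty \cap W^{1,2}$ by our standing choice $\alpha \geq \alpha_0 > 3/4$.
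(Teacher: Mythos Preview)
The paper does not prove this theorem; it is stated as a citation from \cite{Amann} (with \cite{Henry,Lunardi} as parallel references), so there is no ``paper's own proof'' to compare against. Your sketch is a faithful outline of the standard argument in that reference: sectoriality of the principal part, a fixed-point construction of mild solutions, parabolic smoothing for regularity, and the usual continuation criterion.

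One small clarification is worth making. You write that one must ensure $1+\delta\theta(v) \geq 1/2$ \emph{pointwise} and invoke the embedding $X^\alpha \hookrightarrow L^\infty(M)$ to control the multiplier. But in the setup of the paper, $\theta\colon W^{1,2}(M) \to \R$ is a scalar-valued functional (with bounded $C^2$ norm on $W^{1,2}$), not a function on $M$; so $(1+\delta\theta(v))$ is a single real number and $A(v)$ is literally a positive scalar multiple of $-\Delta$. This makes the sectoriality and the Lipschitz dependence $v\mapsto A(v)$ in $\mathcal{L}(X^1,X)$ immediate from the boundedness of $\theta$ alone, without any pointwise $L^\infty$ control. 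The $L^\infty$ embedding is instead needed to handle the nonlinearity $F$ (which in the application involves $W'(u)$), not the principal part. With that adjustment, your plan matches what Amann's framework delivers.
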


We will also need the continuous dependence of solutions with respect to initial conditions and the parameter $\delta\in [0,1)$.

\begin{thm} \label{contdependence} \cite{Amann}
For each $u_0 \in X^\alpha$, let $t^+(\delta,u_0)>0$ denote the maximal time of existence of the solution $u_{\delta,u_0}(t)$, $t \in [0,t^+(\delta,u_0))$, of \eqref{abstract} with $u_{\delta,u_0}(0)=u_0$. Write $\mathcal{D}=\{(t,\delta,v) \in \R_{\geq 0}\times [0,1) \times X^\alpha \mid 0 \leq t < t^+(\delta,v)\}$. Then $\mathcal{D}$ is open in $\R_{\geq 0} \times [0,1) \times X^\alpha$ and the map
    \[\Psi \colon \mathcal{D} \to X^\alpha, \qquad \Psi(t,\delta,v) = u_{\delta,v}(t)\]
is a Lipschitz map and defined a $1$-parameter family of semiflows $(t,v)\mapsto \Psi(t,\delta,v)$ in $X^\alpha$.
\end{thm}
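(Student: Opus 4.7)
The plan is to verify that equation \eqref{abstract}, with $\delta$ treated as a parameter on equal footing with the initial data, falls within the framework of abstract quasilinear parabolic equations developed in \cite{Amann}, and then appeal to the general continuous dependence theorems there. I would begin by rewriting \eqref{abstract} in the form
\[
\frac{du}{dt} + A_\delta(u)u = F(u,\delta), \qquad A_\delta(v) := -(1+\delta\theta(v))\Delta,
\]
and checking that, for $\delta \in [0,\delta_0)$ with $\delta_0$ small, each operator $A_\delta(v)$ is sectorial on $X = L^2(M)$ with domain $X^1 = W^{2,2}(M)$, with sector angle and resolvent bounds uniform in $(v,\delta)$ on bounded subsets of $X^\alpha \times [0,\delta_0)$. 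The hypothesis that $\|\theta\|_{C^2(W^{1,2}(M))}$ is bounded, combined with the embedding $X^\alpha \hookrightarrow W^{1,2}(M)$ for $\alpha \geq \alpha_0 > 3/4$, ensures that $(v,\delta) \mapsto A_\delta(v)$ is locally Lipschitz from $X^\alpha \times [0,\delta_0)$ into $\mathcal{L}(X^1, X)$; similarly $(v,\delta) \mapsto F(v,\delta)$ is locally Lipschitz into $W^{1,2}(M) \subset X$.

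Next I would establish local existence together with local Lipschitz dependence on $(v,\delta)$ in a single contraction argument on $C([0,T],X^\alpha)$. Using the variation of constants formula
\[
\Phi_{\delta,v}(u)(t) = e^{-tA_\delta(v)}v + \int_0^t e^{-(t-s)A_\delta(v)}\bigl[(A_\delta(v)-A_\delta(u(s)))u(s) + F(u(s),\delta)\bigr]\,ds,
\]
one shows that $\Phi_{\delta,v}$ is a contraction on a small ball in $C([0,T],X^\alpha)$ around the trajectory $t \mapsto e^{-tA_\delta(v)}v$, with $T$ depending only on a bound for $\|v\|_{X^\alpha}$ and on $\delta < \delta_0$. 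The key ingredient is the smoothing estimate $\|e^{-tA_\delta(v)}\|_{\mathcal{L}(X,X^\alpha)} \lesssim t^{-\alpha}$ from sectorial operator theory, which keeps the singular kernel in the integral integrable. Tracking the joint Lipschitz dependence of $\Phi_{\delta,v}$ on $(u,v,\delta)$ and invoking the uniform contraction principle yields the Lipschitz continuity of $(\delta,v) \mapsto u_{\delta,v}|_{[0,T]}$.

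For the openness of $\mathcal{D}$ and the Lipschitz property on compact subsets, I would fix $(t_0,\delta_0,v_0) \in \mathcal{D}$, observe that the orbit $K = \{u_{\delta_0,v_0}(t) : t \in [0,t_0]\}$ is compact in $X^\alpha$ by continuity, cover $[0,t_0]$ by finitely many intervals on which the local result applies uniformly in a neighborhood of $K$, and chain together the local Lipschitz estimates via a Gr\"onwall-type argument for the difference of two solutions with data $(\delta,v)$ and $(\delta_0,v_0)$. This produces an estimate
\[
\sup_{t \in [0,t_0]}\|u_{\delta,v}(t)-u_{\delta_0,v_0}(t)\|_{X^\alpha} \leq C(t_0,K)\bigl(\|v-v_0\|_{X^\alpha}+|\delta-\delta_0|\bigr)
\]
on a neighborhood of $(\delta_0,v_0)$, and in particular shows $t^+(\delta,v) > t_0$ there, giving the openness of $\mathcal{D}$ and the local Lipschitz property of $\Psi$ at $(t_0,\delta_0,v_0)$.

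The main obstacle is the quasilinear character of \eqref{abstract}: since the principal part $A_\delta(u)$ depends on the unknown $u$, the semigroup estimates must be combined with the Lipschitz dependence of $v \mapsto A_\delta(v)$ without losing regularity. This forces the contraction argument to live in $X^\alpha$ with $\alpha > 3/4$ — precisely the range in which $X^\alpha \hookrightarrow W^{1,2}(M)$ so that $\theta$ (and hence the leading coefficient) is Lipschitz in the appropriate norm — which is where the choice $\alpha_0 > 3/4$ in the setup becomes essential. Once these hypotheses are verified, the conclusion follows from the general theory in \cite[\S 7--8]{Amann}.
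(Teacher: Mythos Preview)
Your proposal is correct and matches the paper's approach: the paper does not prove this theorem but simply cites it from \cite{Amann}, so the intended argument is precisely to verify that \eqref{abstract} fits into Amann's quasilinear parabolic framework and invoke the general existence and continuous dependence results there. Your sketch of the verification (sectoriality of $A_\delta(v)$ uniform in $(v,\delta)$, local Lipschitz dependence via the contraction mapping and Gr\"onwall chaining) is exactly the content of \cite[\S 7--8]{Amann} specialized to this setting, and is in fact more detailed than anything the paper itself provides.
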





\subsection{Morse-Bott and Morse functions on Hilbert spaces} \label{subsec:morse} The classification result for low-energy solutions of the Allen-Cahn equation in $S^3$ can be used to describe the topology of certain sublevel sets of the energy $E_\e$, as well as certain perturbations of this functional, using tools of Morse theory.

First, recall some basic results about \emph{Morse-Bott} functionals following the exposition of Feehan \cite{Feehan} in the (simpler) case of a Hilbert space $\hc$. 

\begin{defi}
For an open subset $X$ of $\hc$ and a $C^2$ function $f\colon X \to \R$, we say that $f$ is \emph{Morse-Bott} at a critical point $x_0 \in X$ if

\begin{enumerate}
    \item[(1)] There exist an open subset $U \subset X$ and a connected submanifold $C \subset U$ such that $U \cap \{x \in X \mid Df(x)=0\} = C$;
    \item[(2)] $T_{x_0}C = \ker D^2f(x_0)$.
\end{enumerate}

We note that, as pointed in \cite{Feehan}, since $C$ is a submanifold of $X$ and $T_{x_0}C = \ker D^2f(x_0)$, this space has a closed complement in $\hc$.

If $f$ is Morse-Bott at any critical point $x_0$ with $f(x_0)\leq c$, we will say that $f$ is a \emph{Morse-Bott function on} $\{f \leq c\}$. The following lemma is central to the study of Morse-Bott functions.
\end{defi}

\begin{lem*}[Morse-Bott Lemma for, see \cite{Feehan}, see Theorem 2.10 and Remark 2.12] \label{morsebott}
Let $X$ be an open subset of a Hilbert space $\hc$, and let $f \in C^{k+2}(X)$, $k \geq 1$. Suppose that $f$ is Morse-Bott at $x_0 \in X$. Then there are open neighborhoods $\mathcal{V} \subset \hc$ of the origin, and $\mathcal{U} \subset X$ of $x_0$, and a $C^k$ diffeomorphism $\Phi \colon \mathcal{V} \to \mathcal{U}$ such that $\Phi(0)=x_0$, $D\Phi(0) = \mathrm{id}_\hc$, and
    \[f(\Phi(y)) = f(x_0) + \frac{1}{2}\left\langle y, Ay \right\rangle, \quad \text{for all} \quad y \in \mathcal{V}, \]
where
    \[A = D^2f(x_0) = D^2(f\circ \Phi)(0).\]
\end{lem*}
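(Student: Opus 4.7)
The plan is to prove the Morse-Bott Lemma via the Moser homotopy trick, which is the standard route for such statements in both finite and infinite dimensions and is what Feehan uses in the cited reference. The scheme has two stages: first, a splitting and implicit-function-theorem argument straightens the critical manifold $C$ into a linear subspace; second, Moser's deformation argument interpolates between $f$ and its quadratic model $Q(y) = f(x_0) + \tfrac{1}{2}\langle y, Ay\rangle$ through the flow of a carefully chosen time-dependent vector field. Throughout, I translate so that $x_0 = 0$ and $f(x_0) = 0$, and I write $A = D^2 f(0)$, $K := \ker A = T_0 C$, with closed complement $K^\perp$ (which exists by the Morse-Bott hypothesis, as noted right before the lemma statement), so $\hc = K \oplus K^\perp$.

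For the straightening step, the projection $C \to K$ near the origin is a $C^{k+2}$ local diffeomorphism with identity derivative at $0$, so the implicit function theorem presents $C$ as the graph of a $C^{k+2}$ map $\gamma: U_K \to K^\perp$ with $\gamma(0) = 0$ and $D\gamma(0) = 0$. The $C^{k+2}$ local diffeomorphism $\Psi_1(k, v) = (k, v + \gamma(k))$ satisfies $\Psi_1(0) = 0$, $D\Psi_1(0) = \mathrm{id}_\hc$, and carries $(K \cap U) \times \{0\}$ onto $C$, so after replacing $f$ by $f \circ \Psi_1$ I may assume that the critical set near $0$ is $(K \cap U) \times \{0\}$. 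Since $C$ is connected and critical, $f$ is locally constant on $C$, and since $Df$ vanishes on $C$, Taylor's theorem with integral remainder applied fiberwise in $v$ produces a $C^k$ family of self-adjoint operators $B(k, v)$ on $K^\perp$ with
\[ f(k, v) = \tfrac{1}{2}\langle v, B(k, v) v\rangle, \qquad B(0, 0) = A|_{K^\perp}, \]
and by continuity $B(k, v)$ remains invertible on $K^\perp$ for $(k, v)$ in a neighborhood of the origin.

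For the Moser step, set $Q(k, v) = \tfrac{1}{2}\langle v, A v\rangle = \tfrac{1}{2}\langle v, B(0,0) v\rangle$, interpolate $f_t := (1 - t) Q + t f$ for $t \in [0, 1]$, and look for a time-dependent vector field of the form $X_t(k, v) = (0, Y_t(k, v) v)$ whose flow $\phi_t$ satisfies $f_t \circ \phi_t = Q$. Differentiating in $t$ yields the cohomological equation $Df_t(k, v) \cdot X_t(k, v) = Q - f = \tfrac{1}{2}\langle v, (B(0,0) - B(k, v)) v\rangle$, which I solve for $Y_t$ by an explicit algebraic formula using the invertibility of $B(k, v)$ and of the partial Hessian of $f_t$ in the $v$-direction. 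The explicit quadratic-form structure of $Q - f$ in $v$ forces $X_t$ to vanish to first order at the origin, so its flow $\phi_1$ is a $C^k$ local diffeomorphism fixing $0$ with identity derivative. The required map is then $\Phi := \Psi_1 \circ \phi_1$, for which $f \circ \Phi = Q$, $\Phi(0) = x_0$, and $D\Phi(0) = \mathrm{id}_\hc$ follow from the normalizations at each step, and the identity $A = D^2(f \circ \Phi)(0)$ is immediate from the chain rule once $D\Phi(0) = \mathrm{id}_\hc$.

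The main obstacle is the Moser step in infinite dimensions: one must verify that the algebraic solution for $Y_t$ yields a vector field that is $C^k$ jointly in $(t, k, v)$ and whose flow exists on the full interval $[0, 1]$ over a fixed Hilbert neighborhood of the origin. In the finite-dimensional Morse lemma this is a routine symmetric-matrix inversion, but in $\hc$ it requires uniform operator bounds on $B(k, v)^{-1}$, together with care in converting pointwise invertibility into a locally Lipschitz vector field suitable for classical ODE theory in Hilbert spaces. The key enabling fact, thanks to Step 1, is that the entire problem reduces to the invertibility of the single self-adjoint family $B(k, v)$ on $K^\perp$, which in turn follows from the Morse-Bott nondegeneracy hypothesis on $A|_{K^\perp}$.
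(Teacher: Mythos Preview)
The paper does not supply its own proof of this lemma; it is quoted directly from Feehan (Theorem~2.10 and Remark~2.12 in that reference), so there is no in-paper argument to compare against. Your two-step outline---straightening the critical submanifold via the implicit function theorem, then running the Moser path method on the normal fibers with the vector field obtained by inverting the fiberwise Hessian---is exactly the standard route and is the one Feehan takes, and the sketch is correct.

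One small correction on the regularity bookkeeping: since $Df\in C^{k+1}$, the implicit function theorem gives $\gamma\in C^{k+1}$, not $C^{k+2}$ as you write. This does not spoil the conclusion, because the Moser vector field is built from $D_v^2(f\circ\Psi_1)(k,v)=D^2f(k+\gamma(k)+v)|_{K^\perp\times K^\perp}$, which is $C^k$ in $(k,v)$ even though $\gamma$ is only $C^{k+1}$ (the $v$-derivatives never hit $\gamma$). Hence $B$, $M_t$, $R$, and $X_t$ are $C^k$, the flow $\phi_1$ is $C^k$, and $\Phi=\Psi_1\circ\phi_1$ is $C^k$ as claimed.
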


We will be mostly interested in the case $D^2f(x_0)$ is represented by a self-adjoint operator with a discrete spectrum
    \[\lambda_1 < \lambda_2 \leq \ldots \leq \lambda_p < \lambda_{p+1} = \ldots = \lambda_{p+k} = 0 < \lambda_{p+k+1} < \ldots,\]
where $p \in \N \cup \{0\}$ is the \emph{Morse index} of $f$ at $x_0$, and $k = \dim \ker D^2f(x_0)$ is the \emph{nullity} of the critical point $x_0$. In this case, we can decompose $\hc$ in terms of the negative eigenspaces $E_1,\ldots,E_p$, and positive eigenspaces $E_+$ of $D^2f(x_0)$, so that
    \[f(\Phi(y)) = f(x_0) + \frac{1}{2}\sum_{i=1}^p \lambda_i \|P_i y\|^2 + \frac{1}{2} \left\langle P_+ y, A (P_+y) \right\rangle, \]
where $P_i \colon \hc \to E_i$ and $P_+ \colon \hc \to E_+$ are the orthogonal projections.\medskip

We also recall the following useful:

\begin{lem} \label{critical}
Let $C \subset X$ be a compact, connected, embedded $C^{k+2}$ submanifold. Denote by $N(C)$ the normal bundle of $C$ in $X$ endowed with the metric induced by $X$. For each $r>0$, let $N_r(C) = \{v \in N(C) \mid \|v\|<r\}$. Given an open $U_0 \subset X$ containing $C$, there exist $r>0$ such that the map $\zeta \colon N(C) \to X$ given by $\zeta(v) = u+v$ for $v \perp T_uC$ in $N_r(C)$ is a diffeomorphism onto a neighborhood $U \subset U_0$ of $C$. Moreover, for every $u \in U$, the derivative of the associated nearest point projection $\Pi \colon U \to C$ at $u$ is the orthogonal projection onto $T_uC$.
\end{lem}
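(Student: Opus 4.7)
The plan is to prove this as a variant of the tubular neighborhood theorem in the Hilbert space setting, using the inverse function theorem together with the compactness of $C$, and then to deduce the formula for $D\Pi$ from the naturality of the nearest point projection with respect to $\zeta$.

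First I would set up the map $\zeta$ carefully. Since $C$ is a compact $C^{k+2}$ embedded submanifold of $\hc$ with $k\geq 1$, the total space $N(C)$ is a $C^{k+1}$ Hilbert manifold, and $\zeta(u,v)=u+v$ is a $C^{k+1}$ map into $\hc$. The core computation is the derivative of $\zeta$ at a zero-section point $(u,0)$: using the canonical splitting $T_{(u,0)}N(C) \cong T_u C \oplus N_uC$, one has
\[D\zeta_{(u,0)}(w,v) = w + v, \qquad w \in T_uC, \ v \in N_uC.\]
Since $\hc = T_uC \oplus N_uC$ is an orthogonal topological direct sum, $D\zeta_{(u,0)}$ is a bounded linear isomorphism. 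The inverse function theorem (in Banach space form) then gives, at each $u \in C$, an open neighborhood of $(u,0)$ in $N(C)$ on which $\zeta$ is a $C^{k+1}$ diffeomorphism onto an open set in $\hc$ contained in $U_0$.

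Next I would upgrade this to a uniform neighborhood. By compactness of $C$, a finite collection of such local diffeomorphism neighborhoods covers the zero section, so there exists $r_0>0$ such that $\zeta$ is a local diffeomorphism on $N_{r_0}(C)$ and maps into $U_0$. The main obstacle is to guarantee \emph{global} injectivity on some $N_r(C)$. I would argue by contradiction: if no such $r$ exists, there are sequences $(u_n,v_n)\neq (u_n',v_n')$ in $N_{1/n}(C)$ with $\zeta(u_n,v_n)=\zeta(u_n',v_n')$; by compactness of $C$ we may pass to subsequences with $u_n,u_n'\to u_\infty \in C$ and $v_n,v_n'\to 0$. Eventually both $(u_n,v_n)$ and $(u_n',v_n')$ lie in a fixed local chart on which $\zeta$ is injective, a contradiction. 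Choosing such an $r \in (0,r_0)$, the map $\zeta\colon N_r(C)\to U:=\zeta(N_r(C))$ is a $C^{k+1}$ diffeomorphism onto an open neighborhood of $C$ contained in $U_0$, and the nearest point projection $\Pi\colon U\to C$ is well defined by $\Pi = \pi \circ \zeta^{-1}$, where $\pi \colon N(C)\to C$ is the bundle projection.

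Finally, the formula for $D\Pi$ follows from the identity $\Pi \circ \zeta = \pi$. Differentiating at a point $(u,0)$ on the zero section gives
\[D\Pi_u \circ D\zeta_{(u,0)} = D\pi_{(u,0)}.\]
Applied to $(w,v) \in T_uC \oplus N_uC$, the right-hand side is $w$, while $D\zeta_{(u,0)}(w,v)=w+v$. Hence, for every vector $\xi \in \hc$ written uniquely as $\xi = w + v$ with $w \in T_uC$ and $v \in (T_uC)^\perp = N_uC$, one has $D\Pi_u(\xi)=w$, which is exactly the orthogonal projection of $\hc$ onto $T_uC$. This completes the proof; the only real difficulties beyond routine differential calculus are the injectivity of $\zeta$ on a uniform tubular neighborhood, which is handled by compactness of $C$ as above.
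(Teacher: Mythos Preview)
The paper states this lemma without proof, treating it as a standard tubular neighborhood result, so there is no argument in the paper to compare against. Your proof is the standard one and is correct: local invertibility of $\zeta$ at zero-section points via the inverse function theorem, globalization of injectivity by a compactness/contradiction argument, and the chain-rule identity $\Pi\circ\zeta=\pi$ to read off $D\Pi$.

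One remark on scope. You verify that $D\Pi_u$ is the orthogonal projection onto $T_uC$ only for $u \in C$ (zero-section points). This is the only case in which the statement is both meaningful (the notation $T_uC$ requires $u\in C$) and true. If one reads the lemma as asserting, for general $u \in U$, that $D\Pi_u$ is the orthogonal projection onto $T_{\Pi(u)}C$ --- as the paper later does in the proof of Lemma~\ref{lem:est1} --- that stronger assertion is false in general: already for $C=S^1\subset\R^2$ and $u=(r,0)$ with $r\neq 1$, one computes $D\Pi_u(0,1)=(0,1/r)$, not $(0,1)$, because the derivative of a normal section along $C$ need not stay normal. So your restriction to $u\in C$ is not a gap in your argument; it is the correct reading of what can actually be proved.
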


\begin{coro*}
Let $f$ and $X$ be as in the Morse-Bott Lemma, suppose $C \subset \crit f$ is an isolated compact, connected, embedded submanifold of $\crit f$. Then the projection map $\Pi\colon U \to C$ given by the Lemma above satisfies $Df(u)v\neq 0$ for any $u \in U \setminus C$ and any $v \in \ker D\Pi(u)$.
\end{coro*}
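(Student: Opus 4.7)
The plan is to reduce the claim to showing that $Df(u)$ does not vanish identically on the normal fiber $\ker D\Pi(u)$, and then exploit the non-degeneracy of the Hessian in the normal directions provided by the Morse-Bott assumption. Fix $u \in U \setminus C$ and set $x := \Pi(u) \in C$ and $v := u - x$. By the preceding lemma, $D\Pi(u)$ is the orthogonal projection onto $T_xC$, so $\ker D\Pi(u) = N_x C := (T_x C)^\perp$ and $v \in N_x C \setminus\{0\}$. It suffices to produce some $w \in N_x C$ with $Df(u) \cdot w \neq 0$.

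The argument will rest on a Taylor expansion of $Df$ around the critical point $x$. Since $Df(x) = 0$, for any $w \in \hc$ one has
\[Df(u) \cdot w \;=\; D^2 f(x)(v, w) \;+\; R(x, v, w),\]
where $R(x, v, w) = \int_0^1 \bigl(D^2 f(x+tv) - D^2 f(x)\bigr)(v, w)\, dt$ satisfies $|R(x, v, w)| \leq \omega(\|v\|)\, \|v\|\, \|w\|$, with $\omega$ a uniform modulus of continuity of $D^2 f$ on a compact neighborhood of $C$ (available by compactness of $C$). Set $A := D^2 f(x)$. The Morse-Bott assumption, together with the spectral structure recorded after the Morse-Bott Lemma, ensures that $A$ is self-adjoint with $\ker A = T_x C$ and spectrum bounded away from zero on $N_x C$; hence $A$ maps $N_x C$ into itself and $A|_{N_x C}$ is a self-adjoint linear isomorphism. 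Combining continuity of $x \mapsto A(x)$ with compactness of $C$ yields uniform constants $c_0, C_0 > 0$ such that
\[c_0 \|v\| \;\leq\; \|A v\| \;\leq\; C_0 \|v\| \qquad \text{for every } x \in C,\ v \in N_x C.\]

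The key step is to test against the direction $w := A v \in N_x C$, which produces
\[Df(u) \cdot (A v) \;=\; \|A v\|^2 + R(x, v, A v) \;\geq\; \|Av\|^2 - \omega(\|v\|)\, C_0\, \|v\|^2 \;\geq\; \bigl(c_0^2 - C_0\, \omega(\|v\|)\bigr)\|v\|^2,\]
which is strictly positive once $\|v\|$ is smaller than a threshold $r_0>0$ depending only on $c_0$, $C_0$, and $\omega$. Shrinking the tubular radius $r$ from the previous lemma so that $\|u - \Pi(u)\| < r_0$ throughout $U$ completes the argument. The main technical hurdle is securing the uniform constants $c_0, C_0$ and the uniform modulus $\omega$ across all $x \in C$; both reduce to compactness of $C$ combined with the spectral properties of $A(x)$ built into the Morse-Bott framework, and everything else is routine Taylor estimation.
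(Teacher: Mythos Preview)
Your argument is correct. The paper states this corollary without proof, so there is no approach to compare against; your second-order Taylor expansion of $Df$ along the normal fiber is the natural way to supply the details, and the uniform estimates you invoke (the spectral gap $c_0$, the operator bound $C_0$, and the modulus $\omega$) are indeed available from compactness of $C$ together with the hypothesis $f\in C^{k+2}$, $k\geq 1$.

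Two small points. First, you rightly reinterpret the conclusion as ``$Df(u)$ does not vanish identically on $\ker D\Pi(u)$''; the literal reading ``$Df(u)v\neq 0$ for every $v$'' is false whenever $\dim\ker D\Pi(u)\geq 2$, since any linear functional has nontrivial kernel there. This existential version is exactly what the paper uses in the proof of the subsequent perturbation proposition. Second, the phrase ``compact neighborhood of $C$'' is unavailable in an infinite-dimensional Hilbert space; what you actually need---and what your estimate uses---is a uniform bound on $\|D^3 f\|$ (hence a Lipschitz modulus for $D^2 f$) on some tubular neighborhood of the compact set $C$, and this follows from continuity of $D^3 f$ via a standard finite-subcover argument.
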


Next, we check that the Allen-Cahn energy functional on $S^3$ is Morse-Bott at the first three lowest critical values and describe the critical levels at the first min-max values for $E_\e$. This characterization uses the convergence of the Allen-Cahn widths \cite{GasparGuaraco,Dey} and the description of the first $p$-widths for the area functional; see \cite{Nurser,CPCL}.

\begin{prop} \label{energy-mb}
There exists $\e_1>0$ with the following property. For every $\e \in (0,\e_1)$, the the energy functional $E_\e$ is a Morse-Bott functional on $\{E_\e \leq \omega_5(E_\e)=\omega_6(E_\e)=\omega_7(E_\e)\}$. Moreover, $\{E_\e \leq \omega_5(E_\e)\} \cap \crit E_\e$ has precisely four connected components: 
\begin{enumerate}
    \item the global minimizers $\{1\}$ and $\{-1\}$ (as constant functions),
    \item the set $A_\e = \{E_\e = \omega_1(E_\e) = \cdots = \omega_4(E_\e)\} \cap \crit E_\e$ of ground state solutions, which is a smoothly embedded $3$-sphere in $W^{1,2}(S^3)$.
    \item the set $B_\e = \{E_\e = \omega_5(E_\e)=\omega_6(E_\e)=\omega_7(E_\e)\} \cap \crit E_\e$ of $(\mathrm{SO}(2)\times\mathrm{SO}(2))$-symmetric solutions that vanish on some Clifford torus, which is a smoothly embedded Grassmannian manifold $G(2,4)$ in $W^{1,2}(S^3)$.
\end{enumerate}
\end{prop}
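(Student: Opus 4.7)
The plan is to identify the nonconstant critical points at these energy levels as orbits of the isometry group $\isom(S^3)=O(4)$ acting on $W^{1,2}(S^3)$ by composition, compute the stabilizers to pin down the orbit topologies, and verify the Morse-Bott equality $\ker D^2 E_\e(u)=T_uC$ along each critical submanifold $C$. By Theorem \ref{lowenergy}, for $\e\in(0,\e_0)$ every nonconstant solution of \eqref{AC} with $E_\e(u)\leq\omega_5(E_\e)$ is either a ground state vanishing on some equatorial $2$-sphere or an $\mathrm{SO}(2)\times\mathrm{SO}(2)$-symmetric solution vanishing on some Clifford torus; since $E_\e$ is $O(4)$-invariant, each such critical point generates a full orbit of critical points at the same energy. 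Fixing a ground state $u_A$ that vanishes on $\{x_4=0\}$ and is positive on $\{x_4>0\}$, its stabilizer in $O(4)$ is the subgroup fixing this hemisphere, which is isomorphic to $O(3)$, so $O(4)\cdot u_A\cong O(4)/O(3)\cong S^3$. Fixing a Clifford-type $u_B$ positive on one component of $S^3$ minus the standard Clifford torus, its stabilizer is the block-diagonal $O(2)\times O(2)$ preserving the two orthogonal $2$-planes of the Clifford decomposition, so $O(4)\cdot u_B\cong O(4)/(O(2)\times O(2))\cong G(2,4)$. These orbits are smooth, compact, embedded submanifolds of $W^{1,2}(S^3)$ because $O(4)$ is a compact Lie group acting smoothly and the stabilizers are closed subgroups. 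Together with the isolated constants $\pm 1$ (and noting that $E_\e(0)=|S^3|W(0)/\e>\omega_5(E_\e)$ for small $\e$), this accounts for the four connected components of $\crit E_\e\cap\{E_\e\leq\omega_5(E_\e)\}$.

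To verify the Morse-Bott condition I would argue pointwise. At $u\equiv\pm 1$ the second variation $\int_{S^3}\e|\nabla\phi|^2+\e^{-1}W''(\pm 1)\phi^2$ is positive definite (since $W''(\pm 1)>0$), so its kernel is trivial and matches $T_{\pm 1}\{\pm 1\}=\{0\}$. Along $A_\e$ and $B_\e$, the inclusion $T_u(O(4)\cdot u)\subseteq\ker D^2E_\e(u)$ is automatic from $O(4)$-invariance of $E_\e$ (differentiate $E_\e(g\cdot u)=E_\e(u)$ twice in $g$ at the identity and use that $u$ is critical). For the reverse inclusion I would invoke the strong convergence theorem of \cite{ChodoshMantoulidis} (building on \cite{WangWei}): for small $\e$ the nodal set $\{u=0\}$ is a $C^{2,\alpha}$ normal graph over the limit minimal surface, and the spectrum of $DAC_\e$ is close to the spectrum of the limit Jacobi operator, namely $\Delta_{S^2}+2$ on the equator (kernel of dimension $3$, spanned by the first spherical harmonics) and $\Delta_{T^2}+4$ on the Clifford torus with its induced flat metric (kernel of dimension $4$, spanned by the $(\pm 1,\pm 1)$ Fourier modes). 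This yields $\nul DAC_\e(u_A)=3=\dim A_\e$ and $\nul DAC_\e(u_B)=4=\dim B_\e$, proving the Morse-Bott property. An alternative route reads the same nullity identity off the uniqueness-up-to-isometry theorems of \cite{CGGM,H}, via integrability of the Jacobi fields generated by the $O(4)$-action.

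For the width identity $\omega_5(E_\e)=\omega_6(E_\e)=\omega_7(E_\e)$, I would combine Lemma \ref{lem:convwidth} with the convergence of the Allen-Cahn widths to the area widths in \cite{GasparGuaraco,Dey} and the computation of the first area $p$-widths of $S^3$ in \cite{Nurser,CPCL}, which give $\omega_5^{\mathrm{area}}=\omega_6^{\mathrm{area}}=\omega_7^{\mathrm{area}}=2\pi^2$. Each $\omega_p(E_\e)$ is a critical value of $E_\e$, and for $p=5,6,7$ it lies strictly above $\omega_4(E_\e)=E_\e(u_A)$ and within $o(1)$ of $E_\e(u_B)$ as $\e\downarrow 0$; by Theorem \ref{lowenergy} combined with the area gap in $S^3$ just above the Clifford torus (which prevents other critical values from accumulating near $E_\e(u_B)$), the only candidate in this range is $E_\e(u_B)$, so the three widths coincide with $E_\e(u_B)$. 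The most delicate step I expect is the reverse inclusion in the Morse-Bott equality, i.e.\ quantifying the spectral continuity of $DAC_\e$ as $\e\downarrow 0$ and ruling out extra kernel directions beyond those from the $O(4)$-action, which is the analytical heart of the Chodosh-Mantoulidis nondegeneracy theory.
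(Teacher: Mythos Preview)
Your outline is correct and follows essentially the same strategy as the paper: identify the low-energy critical sets via Theorem \ref{lowenergy}, describe the orbit topology, and verify the Morse--Bott equality. A few differences are worth noting. First, for the Morse--Bott property along $A_\e$ and $B_\e$, the paper simply cites the rigidity theorems in \cite{CGGM} and \cite{H}: these results already establish the normal nondegeneracy $\nul DAC_\e(u)=\dim(O(4)\cdot u)$ directly, so there is no need to pass through the Chodosh--Mantoulidis spectral convergence machinery (your ``alternative route'' is in fact the paper's primary route, and it relies on the nullity computation in those references, not merely on uniqueness up to isometry). Second, for the width identity $\omega_5=\omega_6=\omega_7$, your appeal to ``the area gap just above the Clifford torus'' combined with Theorem \ref{lowenergy} is slightly circular, since Theorem \ref{lowenergy} only classifies solutions with $E_\e(u)\leq\omega_5(E_\e)$ and says nothing a priori about the level $\omega_7(E_\e)$; the paper instead argues by contradiction: if $\omega_5(E_{\e_i})<\omega_7(E_{\e_i})$ along a sequence $\e_i\downarrow 0$, then critical points at level $\omega_7(E_{\e_i})$ are Allen--Cahn approximations of a Clifford torus, and \cite[Corollary 5]{H} forces them to be the symmetric Clifford-type solutions with energy $\omega_5(E_{\e_i})$, a contradiction. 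Finally, your explicit stabilizer computations for $A_\e\cong S^3$ and $B_\e\cong G(2,4)$ are correct and more transparent than the paper's argument, which instead observes that the critical sets are images of smooth families of polynomial profiles composed with a smooth one-variable function.
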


\begin{proof}
We first note that if $\e<\e_0$, where $\e_0$ is given by Theorem \ref{lowenergy}, then the critical points listed in the statement are the only critical points of $E_\e$ which have energy $\leq \omega_5(E_\e)$. Moreover, since $D^2E_\e(\pm 1)$ is nondegenerate, it follows that $E_\e$ is Morse-Bott at these constant critical points, for any $\e>0$. 

The fact that $E_\e$ is Morse-Bott on each set of critical points described in (2) and (3) follows from the rigidity results proved in \cite{CGGM} and \cite{H}, respectively, provided we show that $\omega_5(E_\e)=\omega_6(E_\e)=\omega_7(E_\e)$ for sufficiently small $\e$. Otherwise, there would be a sequence $\e_i \downarrow 0$ such that $\omega_5(E_{\e_i}) < \omega_7(E_{\e_i})$. Let $u_i$ is a sequence of critical points of $E_{\e_i}$ with $E_{\e_i}(u_i)=\omega_7(E_{\e_i})$. By \cite{GasparGuaraco,Dey,Nurser}, we have $\omega_7(E_{\e_i}) \to 2\pi^2$, and thus $u_i$ is an Allen-Cahn approximation of a Clifford torus. Using \cite[Corollary 5]{H}, we conclude that $u_i$ is a symmetric solution that vanishes on a Clifford torus and thus $E_{\e_i}(u_i)=\omega_5(E_i)$, which is a contradiction.

Finally, the regularity of these submanifolds is a consequence of the compactness of these critical manifolds, the symmetry of such solutions, and the following observation. Let $\{\psi_p\}_{p \in N}$ be a family of polynomial functions $\psi_p \colon \R^4 \to \R$ parametrized by a smooth manifold $N$ such that $\psi_p(S^3) \subset [-1,1]$ and let $w\colon [-1,1]\to \R$ be a smooth function. Then $p \mapsto w \circ \psi_p|_{S^3}$ is a smooth immersion of $N$ into $W^{1,2}(S^3)$.
\end{proof}

\begin{rmk}
By the results of \cite{CGGM} (see Theorem 2.9), one can also prove that, in the $n$-dimensional round sphere $S^{n}$, the energy $E_\e$ is Morse-Bott at any critical point of energy $\leq \omega_{n+1}(E_\e)$, for any $\e \in (0,\sqrt{-W''(0)/\lambda_1})$, where $\lambda_1$ is the first positive eigenvalue of the Laplace operator in $S^n$.
\end{rmk}

We also include the following result concerning the asymptotic behavior of low energy solutions of \eqref{PAC} in $S^3$ derived on \cite[Proposition 2]{CG2023}, which is essentially a consequence of the smoothing effect of the heat equation and the classification results above.


\begin{prop} \label{compact_flow}
Let $\e$ be as in Proposition \ref{energy-mb} and let $\{u(t)\}_{t \in \R}$ be an eternal solution of \eqref{PAC} with $E_\e(u(t_1)) < \omega_5(E_\e)$ for some $t_1 \in \R$. If $\|u_\e(t) - (\pm 1)\|_{W^{1,2}(S^3)}$ is bounded from below, then there exists a ground state solution $\bar u$ such that $\|u(t)-\bar u\|_{W^{1,2}(S^3)} \to 0$ as $t \to +\infty$. 
\end{prop}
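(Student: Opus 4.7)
The plan is to combine the standard convergence machinery for gradient-like semiflows with the classification of low-energy critical points from Theorem \ref{lowenergy}, and then to upgrade subsequential convergence of $u(t)$ to convergence of the full orbit by exploiting the Morse-Bott structure along $A_\e$ recorded in Proposition \ref{energy-mb}.

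First, I would establish compactness and the basic structure of the $\omega$-limit set. Since $E_\e$ is a Lyapunov function along \eqref{PAC},
\begin{equation*}
\frac{d}{dt} E_\e(u(t)) = -\e \int_{S^3} (\partial_t u)^2 \, d\mu_g \leq 0,
\end{equation*}
so $E_\e(u(t)) \leq E_\e(u(t_1)) < \omega_5(E_\e)$ for all $t\geq t_1$ and $\int_{t_1}^\infty \|\partial_t u(s)\|_{L^2}^2 \, ds < \infty$. Parabolic smoothing combined with this uniform energy bound yields $C^{2,\alpha}(S^3)$-bounds for $u(\cdot,t)$ on $[t_1+1,\infty)$, so the forward orbit is precompact in $W^{1,2}(S^3)$ and its $\omega$-limit set $\omega(u)$ is nonempty, compact, and connected. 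Choosing $t_n\to\infty$ with $\|\partial_t u(t_n)\|_{L^2}\to 0$ and $u(t_n)\to v$, and passing to the limit in $\partial_t u = \Delta u - W'(u)/\e^2$, one sees $v \in \crit E_\e$; thus every element of $\omega(u)$ is a critical point with common energy $E_\infty := \lim_{t\to\infty} E_\e(u(t))$.

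Next I would localize $\omega(u)$. Since $E_\infty < \omega_5(E_\e)$, Theorem \ref{lowenergy} forces $\omega(u) \subset \{-1,+1\} \cup A_\e$; the hypothesis that $\|u(t)-(\pm 1)\|_{W^{1,2}(S^3)}$ is bounded below by a positive constant rules out the two constants, so $\omega(u) \subset A_\e$ and $E_\infty = \omega_1(E_\e)$. In particular, $u(t)$ lies in any prescribed $W^{1,2}$-tubular neighborhood of $A_\e$ for all sufficiently large $t$.

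The final and main step is to upgrade this to convergence to a single $\bar u \in A_\e$; this is where I expect the real difficulty, since a priori $\omega(u)$ could be a nontrivial connected subset of the $3$-dimensional manifold $A_\e$. Here I would invoke the Morse-Bott structure of $E_\e$ along $A_\e$ from Proposition \ref{energy-mb}, which yields a \L{}ojasiewicz-Simon inequality with exponent $1/2$ (see e.g.\ Feehan-Maridakis) in a $W^{1,2}$-neighborhood $\mathcal{U}$ of $A_\e$:
\begin{equation*}
E_\e(u)-\omega_1(E_\e) \leq C\|DE_\e(u)\|_{L^2}^2 \quad \text{for all } u \in \mathcal{U}.
\end{equation*}
For $t$ large enough that $u(t)\in\mathcal{U}$, set $f(t) := E_\e(u(t))-\omega_1(E_\e) \geq 0$ and use the identity $DE_\e(u(t)) = -\e\,\partial_t u(t)$ along the flow to compute, whenever $f(t)>0$,
\begin{equation*}
-\frac{d}{dt}\sqrt{f(t)} = \frac{\|DE_\e(u(t))\|_{L^2}^2}{2\e\sqrt{f(t)}} \geq \frac{\|DE_\e(u(t))\|_{L^2}}{2\e\sqrt{C}} = \frac{\|\partial_t u(t)\|_{L^2}}{2\sqrt{C}}.
\end{equation*}
Integrating gives $\int_{T}^\infty \|\partial_t u\|_{L^2}\, dt < \infty$ for $T$ large, i.e.\ finite $L^2$-length of the orbit; hence $u(t)$ converges in $L^2(S^3)$ to some $\bar u$, and the $C^{2,\alpha}$ precompactness from the first step upgrades this to $W^{1,2}$-convergence. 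Since $\bar u \in \omega(u) \subset A_\e$, the limit $\bar u$ is the desired ground state solution.
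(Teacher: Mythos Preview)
Your argument is correct. The paper does not actually prove this proposition; it only states it and attributes it to \cite[Proposition 2]{CG2023}, remarking that it is ``essentially a consequence of the smoothing effect of the heat equation and the classification results above.'' Your write-up is consistent with that description: the first two steps (energy monotonicity, parabolic smoothing, precompactness of the forward orbit, and localization of the $\omega$-limit set via Theorem \ref{lowenergy}) are exactly the ingredients the paper names. The additional input you supply---the \L{}ojasiewicz--Simon inequality with optimal exponent coming from the Morse--Bott structure of $E_\e$ along $A_\e$---is the natural way to upgrade subsequential convergence to full convergence, and your computation of $-\frac{d}{dt}\sqrt{f(t)}$ is accurate. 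One could alternatively invoke Simon's original analytic \L{}ojasiewicz inequality directly (since $W$ is smooth and the nonlinearity is analytic in $u$), which would bypass the need to cite Proposition \ref{energy-mb}; but using the Morse--Bott exponent $1/2$ as you do gives the cleaner estimate and in fact yields exponential convergence.
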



We recall that if $x_0$ is a critical point of $f$ with nondegenerate Hessian $D^2f(x_0)$, we say that $f$ is a \emph{Morse function} at $x_0$. This fundamental concept in Morse theory was studied in the context of Hilbert manifolds by R. Palais in \cite{Palais}, where the classical charaterization of the topological changes in the sublevel sets ${f \leq c}$ as $c$ traverses a critical value containing only nondegenerate critical points was extended to infinite dimensional settings, provided that $f$ satisfies a compactness condition now referred to as the \emph{Palais-Smale} condition.

In order to describe Palais' result, we introduce the following definition. Let $X$ and $Y$ be $C^2$ smooth manifolds (possibly with boundary) modeled on a separable Hilbert space $\mathcal{H}$. For $0 \leq k \leq \infty$, denote by $D^k$ the closed unit ball in a $k$-dimensional subspace of $\mathcal{H}$ and by $S^{k-1}$ its boundary. Let $\phi\colon D^k \times D^\ell \to X$ be a homeomorphism onto a closed subset $h \subset X$. We say that $X$ \emph{arises from} $Y$ \emph{by a $C^r$-attachment $\phi$ of type $(k,\ell)$} if
\begin{enumerate}
    \item[(i)] $X = Y \cup h$;
    \item[(ii)] The restriction of $\phi$ to $S^{k-1} \times D^\ell$ is a $C^r$-diffeomorphism onto $h \cap \partial Y$;
    \item[(iii)] The restriction of $\phi$ to $(D^k \setminus S^{k-1}) \times D^\ell$ is a $C^r$-diffeomorphism onto $X \setminus Y$.
\end{enumerate}
More generally, suppose we have a sequence $Y=Y_0,Y_1,\ldots, Y_s=X$ of $C^r$ manifolds modelled on $\hc$ such that $Y_{i+1}$ arises from $Y_i$ by a $C^r$-attachment $f_i$ of a handle o type $(k_i,\ell_i)$. We say that $X$ is obtained from $Y$ by \emph{disjoint $C^r$-attachments $(\phi_1,\ldots, \phi_s)$ of handles of type $(k_1,\ell_1),\ldots, (k_s,\ell_s)$} if the images of $\phi_i$ are pairwise disjoint.

\begin{thm*} \cite{Palais}
Let $f$ be a $C^{r+2}$, $r\geq 1$, function on a complete Hilbert manifold $X$. Suppose that
\begin{enumerate}
    \item[(1)] $a<b$ are such that $[a,b]$ contains an unique critical value $c$ for $f$;
    \item[(2)] $f$ satisfies the \emph{Palais-Smale} compactness condition on $[a,b]$, namely if $(u_k)\subset f^{-1}[a,b]$ is a sequence such that $\|Df(u_k)\| \to 0$, then $(u_k)$ subconverges in $X$ to a critical point of $f$.
    \item[(3)] $f$ is Morse at any critical point in $f^{-1}(c)=\{p_1,\ldots,p_s\}$.
\end{enumerate}
Let $k_i$ be the Morse index of $D^2f(x_0)$ and let $\ell_i$ be its co-index (that is, the index of $-Df^2(x_0)$). Then $f^b=\{x \in M \mid f(x) \leq b\}$ is $C^r$-diffeomorphic to $f^a = \{x \in X \mid f(x) \leq a\}$ by $s$ disjoint $C^r$-attachments $(\phi_1,\ldots,\phi_s)$ of handles of type $(k_1,\ell_1),\ldots,(k_s,\ell_s)$.

Furthermore, $f^b$ has
    \[f^a \cup \phi_1(D^{k_1} \times \{0\}) \cup \ldots \cup \phi_s(D^{k_s}\times \{0\})\]
as a strong deformation retract. 
\end{thm*}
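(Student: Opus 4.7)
The plan is the classical Morse-theoretic one in Hilbert manifolds: use a negative pseudo-gradient flow to deform $f^b$ down onto the union of $f^a$ with local caps around $p_1,\ldots,p_s$, and identify each cap with a standard handle via the Morse Lemma (which is the special case of the Morse-Bott Lemma quoted above in which $D^2 f(x_0)$ has trivial kernel).

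\emph{Step 1 (isolating charts).} The Palais-Smale condition on $[a,b]$ together with the Morse condition implies that $f^{-1}(c)\cap \crit f$ is both compact and discrete, hence finite; write it as $\{p_1,\ldots,p_s\}$. Applying the Morse-Bott Lemma at each $p_i$ yields pairwise disjoint open neighborhoods $\mathcal{U}_i \subset X$ of $p_i$ and $C^r$-diffeomorphisms $\Phi_i \colon \mathcal{V}_i \to \mathcal{U}_i$ with $\Phi_i(0)=p_i$, such that on the orthogonal splitting $\hc = E_i^- \oplus E_i^+$ into the closed negative and positive spectral subspaces of $D^2 f(p_i)$, a further linear rescaling of coordinates gives
\[
f(\Phi_i(x_-,x_+)) \;=\; c \;-\; \|x_-\|^2 \;+\; \|x_+\|^2,
\]
with $\dim E_i^- = k_i$ and $\dim E_i^+ = \ell_i$. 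Shrink $\mathcal{V}_i$ to a product of closed balls $\bar B_r^{k_i}\times \bar B_r^{\ell_i}$ with $r>0$ small, and (since there is no other critical value in $[a,b]$) replace $(a,b)$ by the sub-interval $(c-r^2,c+r^2)$, which does not affect $f^a$ or $f^b$ up to diffeomorphism provided we subsequently deform across the bands $[a,c-r^2]$ and $[c+r^2,b]$ by the regular-value flow.

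\emph{Step 2 (deformation outside the Morse charts).} On $f^{-1}[a,b] \setminus \bigcup_i \Phi_i(B_{r/2}^{k_i}\times B_{r/2}^{\ell_i})$ the function $f$ has no critical points, and by the Palais-Smale hypothesis $\|Df\|$ is uniformly bounded below there by some $\kappa>0$. Using a smooth cutoff equal to $1$ outside $\bigcup_i \Phi_i(B_{r/2}^{k_i}\times B_{r/2}^{\ell_i})$ and vanishing on $\bigcup_i \Phi_i(B_{r/4}^{k_i}\times B_{r/4}^{\ell_i})$, build a bounded, locally Lipschitz pseudo-gradient vector field $V$ on $X$ with $Df(x)\cdot V(x) \le -1$ where the cutoff equals $1$ and $V\equiv 0$ near the critical points. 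Its flow $\eta_t$ is globally defined by completeness of $X$, and following it for time $T = (c+r^2)-a$ pushes $f^b$ into the set $f^a \cup \bigcup_i \Phi_i(\bar B_r^{k_i}\times \bar B_r^{\ell_i})$, fixing $f^a$ throughout.

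\emph{Step 3 (handle identification and final retract).} Set $\phi_i := \Phi_i|_{D^{k_i}\times D^{\ell_i}}$, where $D^{k_i} = \bar B_r^{k_i}$ and $D^{\ell_i} = \bar B_r^{\ell_i}$. In these coordinates $\{f\le a\}$ reads $\|x_-\|^2 - \|x_+\|^2 \ge r^2$, so $\phi_i(S^{k_i-1}\times D^{\ell_i})$ sits in $f^{-1}(a)$ and $\phi_i((D^{k_i}\setminus S^{k_i-1})\times D^{\ell_i})$ sits in $f^{-1}(a,b]$, verifying conditions (i)--(iii) for a disjoint $C^r$-attachment of handles of type $(k_i,\ell_i)$. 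Moreover, on each cap the fiberwise radial retraction $(x_-,x_+,s)\mapsto (x_-, (1-s)x_+)$ is a strong deformation retract onto $D^{k_i}\times\{0\}$ that preserves the locus $\|x_-\|^2 - \|x_+\|^2\ge r^2$; gluing this with the flow of Step 2 on $f^{-1}[a,b]$ produces the required strong deformation retract of $f^b$ onto $f^a \cup \bigcup_i \phi_i(D^{k_i}\times\{0\})$.

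\emph{Main obstacle.} The subtle point is entirely in Step 2: in a Hilbert manifold $-\nabla f$ need not be complete, bounded sets are not precompact, and there is no a priori lower bound on $\|Df\|$ away from $\crit f$. The Palais-Smale condition is used precisely to obtain such a uniform bound on $f^{-1}[a,b]$ outside the Morse charts, which then allows us to construct a bounded pseudo-gradient whose flow exists for all time and drains every point of $f^b$ into $f^a \cup \bigcup_i \mathcal{U}_i$ in a uniformly bounded time. Once this infinite-dimensional deformation lemma is in place, the Morse Lemma reduces the local geometry near each $p_i$ to the finite-dimensional model and the handle decomposition follows handle by handle.
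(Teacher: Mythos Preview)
The paper does not prove this theorem; it is quoted from \cite{Palais} and used as a black box (the only further comment is the Remark following it, which tracks the attaching maps back to Morse--Bott coordinates). So there is no ``paper's own proof'' to compare with, and your sketch is being measured against the classical Palais argument.

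Your Steps~1 and~2 are the standard skeleton and are fine. The real issue is Step~3. You set $a=c-r^2$, $b=c+r^2$ and take $\phi_i=\Phi_i|_{D^{k_i}\times D^{\ell_i}}$ with $D^{k_i}=\bar B_r^{k_i}$, $D^{\ell_i}=\bar B_r^{\ell_i}$. You then assert that $\phi_i(S^{k_i-1}\times D^{\ell_i})$ sits in $f^{-1}(a)$. This is false: on $\{\|x_-\|=r,\ \|x_+\|\le r\}$ one has $f=c-r^2+\|x_+\|^2$, which ranges over $[c-r^2,\,c]=[a,c]$, and equals $a$ only when $x_+=0$. So with this na\"ive $\phi_i$ the image $h_i=\phi_i(D^{k_i}\times D^{\ell_i})$ is not a handle attached along $\partial f^a$: condition~(ii) of the attachment definition fails, and $f^a\cup\bigcup_i h_i$ is not even a manifold with boundary. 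For the same reason your ``fiberwise radial retraction'' $(x_-,x_+,s)\mapsto(x_-,(1-s)x_+)$ does not preserve $f^a$ on the part of the cap lying over $\|x_-\|<r$, so the glued retraction in the last sentence of Step~3 is not well defined on the overlap.

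What is missing is exactly the device Palais (following Milnor) uses: one does not attach $D^{k_i}\times D^{\ell_i}$ directly, but first modifies $f$ near each $p_i$ by subtracting a bump $\mu(\|x_-\|^2+2\|x_+\|^2)$ to obtain $F$ with the same critical points and indices but $F(p_i)<a$; then $f^b=F^b$, the interval $[a,b]$ contains no critical value of $F$, and the pseudo-gradient flow of $F$ deforms $F^b$ onto $F^a$, which one checks is precisely $f^a$ with the correctly shaped handles attached along $\partial f^a$. Equivalently, one must bend the attaching region so that the ``side'' $S^{k_i-1}\times D^{\ell_i}$ actually lands on the level set $f^{-1}(a)$. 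Without this step, the conclusion about $C^r$-attachments does not follow from what you wrote.
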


\begin{rmk}
By tracking back the attachment maps $\phi_i$ in the proof of the main result of \cite{Palais} (see sections \S 11 and \S 12), one sees that if $a$ and $b$ are sufficiently close to $c$, then the images $\phi_i(D^{k_i} \times \{0\})$ can be chosen as the image by a local $C^r$ diffeomorphism of a small disk in the space that corresponds to the negative directions for $f$ in Morse-Bott coordinates. This Remark will be used in the proof of the \ref{main} to express the generators of the relative $k_i$-homology groups of $(f^b,f^a)$ in terms of small closed disks on the local unstable manifold of the (negative) gradient flow of $f$ corresponding to critical points in $f^{-1}(c)$ of index $k_i$.
\end{rmk}




\section{Heteroclinic orbits for a perturbed flow} \label{sec: Heteroclinic orbits for a perturbed flow}

We would like to perturb the Allen-Cahn energy functional in $S^3$ to obtain a Morse function defined on $W^{1,2}(S^3)$ with controlled energy and Morse indices near the critical manifolds given by the solutions at the energy levels $\omega_1(E_\e) = \inf\{E_\e(u) \mid DE_\e(u)=0, E_\e(u)>0\}$ -- that is, the ground state solutions -- and $\omega_5(E_\e)$, which are the solutions that vanish on a Clifford torus. This can be achieved by the following general perturbation argument, which is inspired by the construction of \cite{AB,BH}.


\begin{prop} \label{perturb}
Let $X$ be an open subset of a separable Hilbert space $\hc$, and let $f \in C^{k+2}(X)$ satisfy the Palais-Smale condition. Suppose that $C_1,\ldots, C_N \subset \crit f$ are disjoint compact $C^{k+2}$ submanifolds of $X$, and that $f$ is Morse-Bott with finite index at any point of $C:=C_1 \cup \ldots \cup C_N$. Suppose also that $g_i\colon C_i \to \R$ are $C^{k+2}$ Morse functions, let $g: C \to \R$  be the function such that $g|_{C_i} = g_i$, and let $U = N_r(C) \subset X$ be a tubular neighborhood of $C$. There exist $\delta_1>0$ and $C^{k+2}$ functions $f_\delta \colon X \to R$, for $\delta \in [0,\delta_1)$ such that $f_0=f$ and, for any $\delta \in (0,\delta_1)$:
    \begin{enumerate}
        \item[(i)] $f_\delta|_U$ is a Morse function, and $f=f_\delta$ outside $U$;
        \item[(ii)] there exist a smaller neighborhood $N_{r'}(C)$, $r'<r$ of $C$ such that $f_\delta = f + \delta G$ in $N_{r'}(C)$, where $G \in C^{k+2}(U)$ is an extension of $g$ with $G = g \circ \Pi$ and $\Pi\colon U \to C$ is the nearest point projection associated to $U$. In particular, $DG(p) = Dg(\Pi(p)) \circ D\Pi(p)$ for any $p \in N_{r'}(C)$;
        \item[(iii)] $(\crit f_\delta) \cap U = \bigcup_i (\crit g_i)$, and the Morse index of $f_\delta$ at any $x_0 \in C$ is $\ind(f,x_0) + \ind(g_i,x_0)$
        \item[(iv)] $\|f-f_\delta\|_{C^{k}} < c_0 \delta$, for $c_0>0$ depending on $f|_U$ only.
    \end{enumerate}
\end{prop}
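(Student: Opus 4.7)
My plan is to construct $f_\delta$ by smoothly grafting $\delta\cdot(g\circ\Pi)$ onto $f$ via a cutoff supported in $U$. After possibly shrinking $r$ so that $\crit f\cap U = C$ (which can be arranged from Morse-Bott plus Palais-Smale, since a sequence of critical points in $U\setminus C$ would subconverge to a point of $C$ and contradict the normal nondegeneracy of $D^2 f$), I would pick radii $0 < r' < r'' < r$ and a $C^{k+2}$ cutoff $\eta\colon X\to[0,1]$ with $\eta\equiv 1$ on $N_{r'}(C)$ and $\supp\eta\subset N_{r''}(C)$, built as a smooth function of $\|v\|^2$ in the normal-bundle coordinates $(u,v)$ from Lemma \ref{critical}. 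With $G := g\circ\Pi$ on $U$, I set
    \[f_\delta := f + \delta\,\eta\cdot G.\]
Outside $U$ we have $\eta\equiv 0$, so $f_\delta = f$ there; property (ii) holds on $N_{r'}(C)$ where $\eta\equiv 1$, and (iv) follows from the uniform $C^{k+2}$ bounds on $\eta$, $\Pi$, and $g|_C$.

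The heart of the argument is (iii), which I would split into three subregions. \emph{Region A} ($N_{r'}(C)\setminus C$): the Corollary after Lemma \ref{critical} furnishes some $v\in\ker D\Pi(u)$ with $Df(u)v\neq 0$; since $DG(u)=Dg(\Pi(u))\circ D\Pi(u)$ annihilates $\ker D\Pi(u)$, I get $Df_\delta(u)v = Df(u)v\neq 0$ for every $\delta$. \emph{Region B} ($u\in C$): here $Df(u)=0$ and $Df_\delta(u) = \delta\,Dg_i(u)\circ P_{T_uC_i}$, vanishing precisely at critical points of $g_i$; this also gives the reverse inclusion $\crit g\subset \crit f_\delta$. \emph{Region C} (the annulus $\overline{N_r(C)}\setminus N_{r'}(C)$): I would use Palais-Smale to extract a uniform bound $\|Df\|\geq c_1>0$, since otherwise a sequence $u_k$ in this region with $Df(u_k)\to 0$ would be bounded (as $\dist(u_k,C)\leq r$ and $C$ is compact), hence subconverges to a critical point lying in the closed annulus, contradicting $\crit f\cap U = C$. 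Since $\|D(\eta\cdot G)\|$ is bounded by some $c_2$ depending only on $f|_U$, choosing $\delta_1 < c_1/(2c_2)$ ensures $\|Df_\delta\|>0$ throughout Region C.

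For the Morse property and the index identity at $x_0\in\crit g_i$, I would decompose $\hc = T_{x_0}C_i\oplus(T_{x_0}C_i)^\perp$. By Morse-Bott, $D^2f(x_0)$ vanishes on the first factor and is nondegenerate on the second with index $\ind(f,x_0)$. Using $D\Pi(x_0) = P_{T_{x_0}C_i}$ together with $Dg_i(x_0)=0$ killing the $D^2\Pi$ contribution, a direct computation shows that $D^2G(x_0)$ restricted to $T_{x_0}C_i\times T_{x_0}C_i$ coincides with $D^2 g_i(x_0)$, which is nondegenerate with index $\ind(g_i,x_0)$ since $g_i$ is Morse. A standard block-matrix/Schur-complement perturbation then gives, for $\delta$ small, that $D^2 f_\delta(x_0)$ is nondegenerate with Morse index $\ind(f,x_0)+\ind(g_i,x_0)$.

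The cleanest step is Region A, where the geometry of the nearest-point projection cancels the perturbation with no smallness hypothesis on $\delta$. The real obstacle is Region C: this is what genuinely forces the Palais-Smale condition into the hypothesis and determines $\delta_1$. Once the Hessian decomposition is set up, the index identity reduces to a routine eigenvalue-perturbation estimate.
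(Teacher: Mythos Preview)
Your proposal is correct and follows essentially the same line as the paper: define $f_\delta=f+\delta\,\chi\cdot(g\circ\Pi)$ with a cutoff $\chi$ supported in $U$, then split $U$ into the region where $\chi\equiv 1$ (handled via the Corollary to Lemma~\ref{critical}, exactly your Regions A and B) and the transition annulus (handled via Palais--Smale to bound $\|Df\|$ below, exactly your Region C). The one minor difference is the Hessian step: you invoke a Schur-complement/perturbation argument, whereas the paper observes directly that $D^2f_\delta(x_0)$ is \emph{exactly} block diagonal, namely $D^2f_\delta(x_0)(u,v)=D^2f(x_0)(\Pi^\perp u,\Pi^\perp v)+\delta\,D^2g(x_0)(D\Pi(x_0)u,D\Pi(x_0)v)$, since $T_{x_0}C=\ker D^2f(x_0)$ kills all cross terms; this yields the index formula for every $\delta>0$ without any smallness or perturbation estimate.
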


\begin{proof}
Without loss of generality, we may assume $N=1$. We have that $G = g \circ \Pi$ is a $C^{k+2}$ map with $\crit G = \Pi^{-1}(\crit g)$. Choose a smooth bump function $\chi \colon X \to \R$ with bounded $C^k$ norm and such that $\{\chi = 1\}$ contains a neighborhood of $C$ and is supported in $U$. Then $\chi \cdot G$ defines a $C^{k}$ function on $X$. We claim that, for sufficiently small $\delta>0$, the functions
     \[f_\delta(x) = f(x) + \delta \cdot \chi(x) \cdot G(x)\]
 satisfy the required conditions.

First, by the compactness of $C$, we may assume $\sup_{u \in U \setminus \{\chi =1 \}}(f(u) + \|Df(u)\|) <+\infty$. Moreover, by the Palais-Smale condition, we can also assume that $\inf_{u \in U \setminus \{\chi = 1\}} \|Df(u)\|>0$. Therefore, there exists $\delta_1>0$ such that
     \[\sup_{u \in U \setminus \{\chi =1 \}} \|\delta_1\cdot D(\chi G)(u)\| \ <\ \inf_{u \in U \setminus \{\chi = 1\}} \|Df(u)\|.\]
It follows that $Df_\delta(x) \neq 0$ in  $U \setminus \{\chi =1\}$ for $\delta \in (0,\delta_1)$. In $\{\chi =1\}$, by the Corollary of Lemma \ref{critical}, we have $Df(x)v \neq 0$ for any $x \in U \setminus C$ and any $v \in \ker D\Pi(x)$, so $f_\delta$ has no critical points in $\{\chi=1\} \setminus C$. Moreover, for any $x \in C$, we have $Df(x)=0$ and $\hc$ splits as $\ker D\Pi(x) \oplus T_xC$, where $D\Pi(x)=$ the orthogonal projection onto $T_xC$. Therefore $(\crit f_\delta) \cap C = \crit g$, and $(\crit f_\delta) \cap U = \crit g$.

 At any critical point $x\in C$ for $g$, if $\Pi^\perp$ is the projection onto $\ker D\Pi(x)$ relative to $T_xC \oplus \ker D\Pi(x)$. Then
     \[D^2f_\delta(x)(u,v) = D^2f(x)(\Pi^\perp u,\Pi^\perp v) + \delta \cdot D^2g(x)(D\Pi(x) u, D\Pi(x) v),\]
 proving the characterization of the Morse index $\ind(f_\delta,x_0)$ in (iii). Finally, (iv) follows from the construction, as $\chi$ is supported in $U$, and $g$, $\Pi$, and $\chi$ have bounded $C^k$ norms in $U$.
 \end{proof}

\subsection{Heteroclinic orbits for Morse perturbations of Allen-Cahn energy in \texorpdfstring{$S^3$}{Lg}} \label{subsec:orbits} From now on, we write $\mathcal{H} = W^{1,2}(S^3)$, and $f=\frac{1}{\e}E_\e \colon \mathcal{H} \to \R$ for the Allen-Cahn energy
    \[f(u) = \int_{S^3} \frac{|\nabla u|^2}{2} + \frac{W(u)}{\e^2},\]
for a symmetric double-well potential $W \in C^\infty(\R)$ satisfying the conditions stated in Section \ref{subsec:doublewell}. Here $\e \in (0,\e_1)$ is fixed as mentioned in Proposition \ref{energy-mb}, and we continue to write $A_\e$ and $B_\e$ for the critical manifolds of nonconstant solutions at two lowest nontrivial critical energy levels.

We also fix a pair of critical points $\pm b_0 \in B_\e$ -- that is, the Allen-Cahn approximations of a Clifford torus --, and a pair of ground states $\pm a_0 \in A_\e$, and choose \emph{even} smooth Morse functions $g_1\colon A_\e \to (-\infty,0]$ with $g_1^{-1}(0) = \{ \pm a_0\}$ and $g_2 \colon B_\e \to [0,+\infty)$ with $g_2^{-1}(0) = \{ \pm b_0\}$. The existence of such functions follows directly from the existence of suitable Morse functions in the sphere (such as $g(x) = -x_1^2 - 2x_2^2 - 3x_3^2 - 4x_4^2$) and in the Grassmannian space $G(2,4)$ (such as $g(x) = x_1^2 + 2x_2^2 + 3x_3^2 + 4x_4^2 +5x_5^2 + 6x_6^2$, as we identify the oriented Grassmannian $G^+(2,4)$ with $S^2 \times S^2$).

By Proposition \ref{perturb}, there exist a symmetric tubular neighborhood $N_0$ of $A_\e \cup B_\e$, and $C^2$ \emph{even} perturbations $\{f_\delta\colon \hc \to \R\}_{\delta \in (0,\delta_1)}$ of $f$ such that:
    \begin{enumerate}
        \item[(i)] $f_\delta$ are Morse functions on $N_0$ and $f_\delta =f$ outside $N_0$;
        \item[(ii)] $f_\delta = f + \delta G$ on a smaller tubular neighborhood $N_1 \subset N_0$ of $A_\e \cup B_\e$, where $G=g_1 \circ \Pi$ near $A_\e$, $G = g_2 \circ \Pi$ near $B_\e$ and $\Pi$ is the $W^{1,2}$-nearest point projection from $N_0$ onto $A_\e \cup B_\e$;
        \item[(iii)] $(\crit f_\delta) \cap N_0 = \crit g_1 \, \cup\, \crit g_2$ (in particular, they are finite and independent of $\delta$);
        \item[(iv)] $\ind(u,f_\delta) = \ind(u,f) + \ind(u,g_1) = 1+\ind(u,g_1)$ for any $u \in \crit g_1$, and $\ind(u,f_\delta) = \ind(u,f) + \ind(u,g_2) = 5+\ind(u,g_2)$ for any $u \in \crit g_2$.
    \end{enumerate}
We observe that, by the choice of $g_1$ and $g_2$, $\pm a_0$ and $\pm b_0$ are critical points of $f_\delta$, with $\ind(\pm a_0,f_\delta) = 4$ and $\ind(\pm b_0,f_\delta) = 5$.

\begin{rmk}
Note that any bounded sequence $\{u_i\}$ of functions such that $Df_{\delta_i}(u_i) = 0$ with $\delta_i \to 0$ subconverges to some $u \in \crit f$ in $\mathcal{H}$. In fact, by passing to a subsequence, we may assume that either $u_i \in N_0$, $\forall i$, or $u_i \in \hc \setminus N_0$, $\forall i$. In the former case, we see that $u_i$ admits a further subsequence which is constant (by (iii) above). In the latter case, we get $Df(u_i) = 0$, $\forall i$, and by the Palais-Smale property for $f$, we conclude that $u_i$ subconverges to some $u \in \crit f$.
\end{rmk}

The proof of the \ref{main} relies on the following existence result for orbits of the Morse perturbations $f_\delta$:

\begin{thm} \label{flow1}
There exist $\delta_0>0$ (possibly depending on $\e>0$) such that, for every $\delta \in (0,\delta_0)$, there exists an entire solution $u_\delta \colon \R \to \mathcal{H}$ of the $L^2$-gradient flow $\partial_t u_\delta = -Df_\delta(u_\delta)$ of $f_\delta$ such that $\{u_\delta(t) \mid t \in \R, \delta >0\}$ is bounded in $X^\alpha = W^{2\alpha,2}(S^3)$, for $\alpha \in (\frac{3}{4},1)$ and
    \[\|u_\delta(-t) - b_0\|_{W^{1,2}(S^3)} \to 0 \qquad \text{and} \qquad \|u_\delta(t)-a_0\|_{W^{1,2}(S^3)} \to 0\]
as $t \to +\infty$.
\end{thm}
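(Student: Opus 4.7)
The approach is to construct $u_\delta$ as an orbit inside the local $5$-dimensional unstable manifold of $b_0$ under the $L^2$-gradient semi-flow of $f_\delta$, and to force the forward limit to be $a_0$ via a topological contradiction based on the min-max characterization of $\omega_5(f_\delta)$ provided by Lemma \ref{lem:convwidth} and Proposition \ref{energy-mb}. The entire argument belongs in Section \ref{sec: topological argument}.

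First I would set up the gradient flow and the unstable disk. By Theorems \ref{existence}--\ref{contdependence} the negative $L^2$-gradient semi-flow $\Phi^t_\delta$ of $f_\delta$ is a well-defined locally Lipschitz semi-flow on $\hc = W^{1,2}(S^3)$; on the sublevel set $\{f_\delta \leq f_\delta(b_0)+1\}$ orbits exist for all $t\geq 0$ and are uniformly bounded in $X^\alpha$ by parabolic regularity. Since $f_\delta$ is Morse at $b_0$ with index $5$, the Morse Lemma produces a smooth closed $5$-disk $\mathcal{D}\subset \hc$ with $b_0$ in its interior, tangent at $b_0$ to the negative eigenspace of $D^2 f_\delta(b_0)$, on which $f_\delta$ attains its maximum uniquely at $b_0$, and with $f_\delta|_{\partial\mathcal{D}} \leq f_\delta(b_0)-c_0$ for some $c_0>0$. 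By the evenness of $f_\delta$, the antipodal image $-\mathcal{D}$ is the analogous disk at $-b_0$, and $\Phi^t_\delta$ commutes with $u\mapsto -u$. The candidate orbit will be $u_\delta(t) = \Phi^t_\delta(p_\delta)$ for a suitable $p_\delta\in\mathcal{D}$; the backward convergence $u_\delta(-t)\to b_0$ is automatic since $\mathcal{D}\subset W^u_{\mathrm{loc}}(b_0)$.

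The crux is the topological contradiction. Assume no forward orbit starting in $\mathcal{D}$ converges to $a_0$; by the $u\mapsto -u$ symmetry it is equivalent to assume no orbit from $\mathcal{D}\cup(-\mathcal{D})$ converges to $\pm a_0$. The Palais-Smale property and property (iii) of Proposition \ref{perturb} then force every forward orbit from $\mathcal{D}\cup(-\mathcal{D})$ to converge to an element of the finite set $F = \{\pm 1\}\cup(\crit g_1 \setminus\{\pm a_0\})$, all of whose elements have energy strictly below $f_\delta(b_0)$. Continuous dependence (Theorem \ref{contdependence}) together with local stable-manifold theory ensures that the basins $B_c = \{p\in\mathcal{D}: \Phi^t_\delta(p)\to c\}$, $c\in F$, are open and cover $\mathcal{D}$. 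Using Palais' Theorem and the Remark following it in Section \ref{subsec:morse} to identify $\mathcal{D}$ and $-\mathcal{D}$ with the handle-attaching disks at $\pm b_0$, I would then build a $\Z_2$-equivariant continuous map $h\colon S^5 \to \hc\setminus\{0\}$ by: taking $\mathcal{D}$ and $-\mathcal{D}$ as opposite hemispheres with common equator $\partial\mathcal{D}$ lying already in $\{f_\delta \leq f_\delta(b_0)-c_0\}$; pushing the poles $\pm b_0$ strictly below the critical level by a short forward flow of $\mathcal{D}$; and then, as the parameter approaches the poles, running $\Phi^T_\delta$ with $T\to\infty$ to send each hemisphere onto the corresponding basin limits in $F$, glued continuously across the basin cover. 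The resulting image $\Sigma = h(S^5)$ is antipodally symmetric in $\hc\setminus\{0\}$, so by the equivariance of $h$ one has $\ind_{\Z_2}(\Sigma)\geq 6$, i.e.\ $\Sigma\in\mathcal{F}_5$; yet by construction $\sup_{\Sigma} f_\delta < f_\delta(b_0) = \omega_5(f_\delta)$ (the latter equality by Lemma \ref{lem:convwidth} and Proposition \ref{energy-mb}), contradicting $\omega_5(f_\delta) \leq \sup_{\Sigma} f_\delta$. This is the promised \emph{impossible $5$-sweepout}.

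From the contradiction I would extract $p_\delta\in\mathcal{D}$ whose forward orbit converges to $a_0$, set $u_\delta(t) := \Phi^t_\delta(p_\delta)$, and extend backward along $W^u_{\mathrm{loc}}(b_0)$. Uniform boundedness of $\{u_\delta\}$ in $X^\alpha$ then follows from $f_\delta(u_\delta(t)) \leq f_\delta(b_0)$ combined with parabolic regularity as in Theorem \ref{existence}. The main obstacle is the construction of the equivariant sweepout $h$ itself: one must use the Remark after Palais' theorem to replace the maximal points $\pm b_0$ by genuine topological handles of dimension $5$, glue the two hemispheres together with their forward-flowed limits into a continuous $S^5$-parametrization in a $\Z_2$-equivariant way, verify that $0\notin\Sigma$ (an energy consideration, since $f_\delta(0)$ is of order $\e^{-2}$, far above $f_\delta(b_0)$ for small $\e$), and ensure that avoidance of $\pm a_0$ produces a strict decrease of the sup below $f_\delta(b_0)$. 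The elegance is that the geometric non-existence of a heteroclinic from $\pm b_0$ to $\pm a_0$ is converted into a topological defect inconsistent with the Marques-Neves variational characterization of the Clifford torus.
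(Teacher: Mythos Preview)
Your overall strategy---use the unstable $5$-disk at $b_0$, assume no heteroclinic to $a_0$, and manufacture an impossible element of $\mathcal{F}_5$ lying strictly below $\omega_5(f_\delta)$---matches the paper's. The construction of the sweepout, however, has genuine gaps.

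\textbf{The equivariant $S^5$ cannot be built as you describe.} You propose to take $\mathcal{D}$ and $-\mathcal{D}$ as the two hemispheres of $S^5$ with ``common equator $\partial\mathcal{D}$''. But $\partial(-\mathcal{D}) = -\partial\mathcal{D}$ lies near $-b_0$, while $\partial\mathcal{D}$ lies near $b_0$; these are \emph{disjoint} $4$-spheres in $\hc$. An equivariant map $h\colon S^5\to\hc\setminus\{0\}$ must send the (connected) equator $S^4$ into a set invariant under $u\mapsto -u$, and there is no way to match the two boundary spheres continuously without passing through an extra region that you have not constructed. This is exactly the step where the paper has to do real work.

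\textbf{The basin argument is also wrong.} The critical points in $F=\{\pm1\}\cup(\crit g_1\setminus\{\pm a_0\})$ include points of $f_\delta$-index $1$, $2$, and $3$; their basins of attraction are the corresponding stable manifolds, which are \emph{not} open. So the $B_c$ do not give an open cover of $\mathcal{D}$, and the limit map $p\mapsto\lim_{t\to\infty}\Phi_\delta^t(p)$ is not continuous. Flowing ``$T\to\infty$'' does not produce a continuous $h$.

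\textbf{What the paper does instead.} The paper never lets $t\to\infty$ and never tries to build a map out of $S^5$. It works homologically: assuming no orbit from $\partial D_\pm$ hits $\pm a_0$, it flows $\partial D_\pm$ for a \emph{finite} time $T_l$ until the image $S^{T_l}_\pm$ lies in a sublevel $(f_\delta)^l$ with $l\in(\omega_3(f_\delta),\omega_4(f_\delta))$. Since every critical point in $(f_\delta)^l$ has index $\leq 3$, Palais' theorem gives $H_4\bigl(\widetilde{(f_\delta)^l};\Z_2\bigr)=0$, so $\widetilde{S^{T_l}}$ bounds a $5$-chain $\widetilde{B_0}$ inside $\widetilde{(f_\delta)^l}$. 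The cycle $B=D_+^{T_l}+D_-^{T_l}+B_0$ then has $\ind_{\Z_2}(B)\leq 5$ by subadditivity, hence $B\notin\mathcal{F}_5$. Separately, a relative-homology computation in $\bigl((f_\delta)^{\omega_5+r},(f_\delta)^{\omega_5-r}\bigr)$ shows that $B$ is $\Z_2$-homologous to a genuine $5$-sweepout $A\in\mathcal{F}_5$; Lemma \ref{lem:sweep-homology} then forces $B\in\mathcal{F}_5$, a contradiction. The missing ingredient in your sketch---how to connect the two disjoint boundary spheres---is supplied by the vanishing of $H_4$ of the low sublevel, which is precisely where the index bound $\leq 3$ on the lower critical points of $g_1$ enters.
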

\noindent In the remainder of the present section, we will prove our \ref{main} assuming Theorem \ref{flow1}. The proof of Theorem \ref{flow1} will be the main goal of the next section. 

We begin with some auxiliary results about the perturbations $f_\delta$ and their derivatives. We write $\Xi = A_\e \cup B_\e$, and emphasize that $\e>0$ is fixed.

\begin{lem} \label{lem:est1}
There exists $C_1>0$ such that
    \[|\left\langle Df(u), DG(u) \right\rangle_{W^{-1,2} \times W^{1,2}}| \leq C_1\|u-\Pi(u)\|_{L^2(S^3)}^2\]
for every $u \in N_1$.
\end{lem}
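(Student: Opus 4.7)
The strategy is to combine a Taylor expansion of $Df(u)$ around the critical point $p = \Pi(u) \in \Xi$ (so $Df(p) = 0$) with the Morse-Bott cancellation $T_p\Xi = \ker D^2f(p)$ provided by Proposition \ref{energy-mb}. Write $v = u - p$; by the defining property of the $W^{1,2}$-nearest point projection, $v \perp T_p\Xi$ in the $W^{1,2}$ inner product. Let $\tilde w \in \hc$ denote the $W^{1,2}$-Riesz representative of the functional $DG(u)$, so that $\langle Df(u), DG(u)\rangle_{W^{-1,2}\times W^{1,2}} = Df(u)[\tilde w]$. The crucial first observation is that $\tilde w(p) = \nabla_{W^{1,2}}g(p) \in T_p\Xi$, since $Dg(p)$ annihilates $T_p\Xi^\perp$ in the $W^{1,2}$ pairing.

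Using $Df(p) = 0$ and integration by parts, rewrite
\[
Df(u)[\tilde w] = \int_{S^3} v\, L_p \tilde w\, d\mu + \frac{1}{\e^2}\int_{S^3} R(p,v)\, \tilde w\, d\mu,
\]
where $L_p = -\Delta + \e^{-2}W''(p)$ is the Jacobi operator at $p$ and $R(p,v) = W'(p+v) - W'(p) - W''(p)v$. Provided $u$ and $p$ stay in a uniformly bounded set of $L^\infty$ (which holds for the solutions of \eqref{abstract} considered in Theorem \ref{flow1}, being bounded in $X^\alpha \hookrightarrow L^\infty$ for $\alpha \in (3/4, 1)$), the smoothness of $W$ yields the pointwise remainder bound $|R(p,v)| \leq C|v|^2$ via local Lipschitz control on $W''$. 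Hence the second integral is bounded by $C\|\tilde w\|_{L^\infty}\|v\|_{L^2}^2$, with $\|\tilde w\|_{L^\infty}$ uniform on $N_1$ because $\tilde w$ is close to the smooth function $\nabla g(p)$.

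For the first integral, the key cancellation is that $L_p \tilde w(p) = 0$: the Jacobi operator annihilates $\ker L_p = T_p\Xi$, and $\tilde w(p) \in T_p\Xi$. Consequently $L_p \tilde w = L_p(\tilde w - \tilde w(p))$ vanishes to leading order as $u \to p$. The main technical obstacle is to refine this observation into the sharp estimate $\|L_p \tilde w\|_{L^2} \leq C\|v\|_{L^2}$, which together with Cauchy-Schwarz completes the bound; a naive use of $C^1$-smoothness of $u \mapsto \tilde w$ produces only $\|v\|_{W^{1,2}}^2$. To achieve the $L^2$-estimate, one analyzes the formula $\tilde w(u) = D\Pi(u)^*\nabla g(p)$: since $D\Pi(p)^* = \pi$ is the $W^{1,2}$-orthogonal projection onto $T_p\Xi$, the correction $\tilde w - \tilde w(p) = (D\Pi(u)^* - \pi)\nabla g(p)$ depends on $v$ at leading order through the shape operator of $\Xi$ in $\hc$. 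Exploiting that $T_p\Xi$ consists of smooth Jacobi fields of $L_p$, one can deduce that $L_p$ applied to this correction is controlled in $L^2$ by $\|v\|_{L^2}$, yielding the desired quadratic estimate.
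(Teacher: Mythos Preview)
Your overall strategy --- Taylor expand $Df(u)$ around $p = \Pi(u)$ and exploit the Morse--Bott structure --- is the paper's strategy, and your treatment of the quadratic remainder $R(p,v)$ matches the paper's. However, you miss the decisive simplification and, as a result, leave a genuine gap in the handling of the linear term.

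The point you overlook is that $\tilde w(u) \in T_p\Xi = \ker L_p$ for \emph{every} $u \in N_1$, not only at $u = p$. You in fact write $\tilde w(u) = D\Pi(u)^*\nabla g(p)$; since the fiber $\Pi^{-1}(p)$ is, by the very construction of the tubular neighborhood in Lemma~\ref{critical}, the affine normal slice $p + (T_p\Xi)^\perp$, one has $\ker D\Pi(u) = (T_p\Xi)^\perp$ for every $u$ in that fiber, and hence the range of $D\Pi(u)^*$ is $T_p\Xi$. Thus $\tilde w(u) \in T_p\Xi$, and by the Morse--Bott property $T_p\Xi = \ker D^2f(p) = \ker L_p$, so $L_p\tilde w(u) = 0$ identically. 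Your ``first integral'' $\int_{S^3} v\, L_p\tilde w$ therefore vanishes outright. This is exactly what the paper does: it writes the Riesz representative $\psi(u)$ of $DG(u)$ as a linear combination of an orthonormal basis $\{v_j\}$ of $T_{\Pi(u)}\Xi$, notes that each $v_j$ is a smooth Jacobi field so that $\|\psi(u)\|_{L^\infty}$ is uniformly bounded, and is left only with the remainder integral $\e^{-2}\int_{S^3} |\psi(u)|\,|R(p,v)|$.

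Because you miss this, your final paragraph manufactures an unnecessary ``main technical obstacle'' --- bounding $\|L_p(\tilde w - \tilde w(p))\|_{L^2}$ by $C\|v\|_{L^2}$ --- and does not resolve it: you acknowledge that the naive bound yields only $\|v\|_{W^{1,2}}^2$, and the sketch invoking the shape operator of $\Xi$ and smoothness of Jacobi fields is too vague to constitute a proof. The correct observation makes this entire step disappear.
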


\begin{proof}
First recall that $D\Pi(u)$ is the orthogonal projection, relative to the $W^{1,2}$ inner product, onto the tangent space to $\Xi$ at $\Pi(u)$, for all $u \in N_1$. We write $g$ to mean either $g_1$ or $g_2$ on the critical manifolds $A_\e$ or $B_\e$, respectively. If we consider an $W^{1,2}$-orthonormal basis $\{v_j\}_{j=1}^d$ of $T_{\Pi(u)}\Xi$, then
    \begin{align*}
        \left\langle DG(u),\vphi \right\rangle_{W^{-1,2}\times W^{1,2}} & = \left\langle Dg(\Pi(u)) \circ D\Pi(u), \vphi \right\rangle_{W^{-1,2}\times W^{1,2}}\\
        & =  Dg(\Pi(u))\left(\sum_{j=1}^d \left\langle \vphi,v_j \right\rangle_{W^{1,2}}v_j\right)\\
        & = \left\langle \sum_{j=1}^d (Dg(\Pi(u))v_j)v_j, \vphi \right\rangle_{W^{1,2}}.
    \end{align*}
for all $\varphi \in W^{1,2}(S^3)$. Since the functions $v_j$ are smooth, the function $\psi(u) = \sum_j (Dg(\Pi(u))v_j)v_j$ satisfies
    \[\|\psi(u)\|_{L^\infty(S^3)} \leq \sum_{j=1}^d |Dg(\Pi(u))v_j|\cdot \|v_j\|_{L^\infty(u)} \leq C_0\cdot d\cdot \|g\|_{C^1},\]
where $C_0 = \sup\{ \|\vphi\|_{L^\infty(S^3)} \mid \vphi \in \ker D^2f(v), \ v \in \Xi,\  \|\vphi\|_{W^{1,2}(S^3)} = 1\}$ depends only on the eigenfunctions in the kernel of the linearized $\e$-Allen-Cahn operator at solutions in $\Xi$. Since $\Pi(u)$ is a critical point of $f$ and $\psi(u) \in \ker D^2f(\Pi(u))$, we compute:
    \begin{align*}
            \left|\left\langle Df(u), DG(u) \right\rangle_{W^{-1,2} \times W^{1,2}}\right| &= \left| \int_{S^3} \left\langle \nabla \psi(u),\nabla u \right\rangle + \psi(u)\cdot \frac{W'(u)}{\e^2} \right|\\
            & = \left| \int_{S^3} \left\langle \nabla \psi(u),\nabla u - \nabla \Pi(u) \right\rangle + \psi(u)\cdot \frac{W'(u)-W'(\Pi(u))}{\e^2} \right|\\
            & \leq \frac{1}{\e^2} \int_{S^3} |\psi(u)|\cdot |r(u)|
    \end{align*}
where
    \[|r(u)| = |W'(u) - W'(\Pi(u)) - W''(\Pi(u))(u-\Pi(u))| \leq \kappa |u-\Pi(u)|^2\]
for some $\kappa>0$ depending only on the double-well potential $W$. Therefore,
    \[|\left\langle Df(u), DG(u) \right\rangle_{W^{-1,2} \times W^{1,2}}|\leq C_1 \|u-\Pi(u)\|_{L^2(S^3)}^2\]
for $C_1 = \kappa C_0 d\|g\|_{C^1}/ \e^2$.
\end{proof}

The second result is a consequence of the Morse-Bott property for the energy $f=\frac{1}{\e}E_\e$.

\begin{lem} \label{lem:est2}
There exist a tubular neighborhood $N_2 \subset N_1$ of $\Xi$ and $\tau>0$ such that
    \[\tau \|u - \Pi(u)\|_{W^{1,2}(S^3)} \leq \|Df(u)\|_{W^{-1,2}(S^3)} \qquad \text{for all} \qquad u \in N_2.\]
\end{lem}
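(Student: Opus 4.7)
The plan is to exploit the Morse-Bott property of $f = \epsilon^{-1} E_\epsilon$ on $\Xi = A_\epsilon \cup B_\epsilon$, established in Proposition \ref{energy-mb}, together with a spectral gap for the Hessian on the normal bundle and the compactness of $\Xi$. Fix $x_0 \in \Xi$ and set $K := T_{x_0}\Xi = \ker D^2 f(x_0)$. The Hessian $A := D^2 f(x_0)$, viewed as a self-adjoint Fredholm operator $\mathcal{H} \to \mathcal{H}$ (with $\mathcal{H} = W^{1,2}(S^3)$) via the Riesz isomorphism $\mathcal{H} \cong W^{-1,2}(S^3)$, has discrete spectrum with a gap around zero. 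Hence its restriction to $K^{\perp}$ is injective with closed range and, by the open mapping theorem, there exists $c(x_0) > 0$ such that
\[
\|Az\|_{W^{-1,2}(S^3)} \geq c(x_0)\,\|z\|_{W^{1,2}(S^3)} \quad \text{for all } z \in K^{\perp}.
\]

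Next I would apply the Morse-Bott Lemma to get a $C^2$ diffeomorphism $\Phi \colon \mathcal{V} \to \mathcal{U}$ with $\Phi(0) = x_0$, $D\Phi(0) = I$, and $f(\Phi(y)) = f(x_0) + \tfrac{1}{2}\langle y, Ay \rangle$ on $\mathcal{V}$. Writing $y = y_K + y^{\perp}$ with $y_K \in K$ and $y^{\perp} \in K^{\perp}$, one has $D(f \circ \Phi)(y)[\,\cdot\,] = \langle \cdot, A y^{\perp}\rangle$ since $A y_K = 0$. Because $D\Phi$ is a small perturbation of the identity on a shrunken neighborhood, pulling the functional derivative back through $\Phi$ yields
\[
\|Df(\Phi(y))\|_{W^{-1,2}(S^3)} \geq \tfrac{1}{2}\,c(x_0)\,\|y^{\perp}\|_{W^{1,2}(S^3)}.
\]
On the other hand, $\Phi$ sends $\mathcal{V} \cap K$ onto $\mathcal{U} \cap \Xi$, so $\Phi(y_K) \in \Xi$, and the Taylor expansion $\Phi(y) - \Phi(y_K) = y^{\perp} + O(\|y\|^2)$ together with Lemma \ref{critical} (whose corollary identifies $D\Pi$ at points of $\Xi$ with the orthogonal projection onto the tangent space) shows that $\Pi(\Phi(y))$ agrees with $\Phi(y_K)$ up to second-order error, giving $\|u - \Pi(u)\|_{W^{1,2}(S^3)} \leq 2\,\|y^{\perp}\|_{W^{1,2}(S^3)}$ after possibly further shrinking $\mathcal{U}$.

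Combining the two estimates produces the desired inequality on a neighborhood $\mathcal{U}_{x_0}$ of $x_0$ with constant $\tau(x_0) = c(x_0)/4$. Since $\Xi$ is compact, I would then cover it by finitely many such $\mathcal{U}_{x_1}, \ldots, \mathcal{U}_{x_N}$, select a tubular neighborhood $N_2 \subset N_1$ contained in $\bigcup_i \mathcal{U}_{x_i}$, and set $\tau := \min_i \tau(x_i)$. The main obstacle is the identification of the $W^{1,2}$-nearest point projection $\Pi$ with the affine projection $y \mapsto \Phi(y_K)$ up to second order; this critically uses $D\Phi(0) = I$ in the Morse-Bott Lemma, so that the $K^{\perp}$-direction at the origin matches to first order the genuine normal direction to $\Xi$ at nearby points, and hence the dual-norm of $Df$ and the $W^{1,2}$-norm of $u - \Pi(u)$ can both be controlled by $\|y^{\perp}\|_{W^{1,2}(S^3)}$ with multiplicative constants close to $1$.
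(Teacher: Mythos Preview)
Your approach is correct and reaches the same conclusion, but it takes a genuinely different route from the paper. The paper never invokes the Morse--Bott Lemma; instead it works directly in the \emph{linear} tubular-neighborhood coordinates $u=\bar u+\varphi$, $\varphi\in (T_{\bar u}\Xi)^{\perp}$, furnished by Lemma~\ref{critical}. In those coordinates one has $\Pi(u)=\bar u$ \emph{exactly}, so $u-\Pi(u)=\varphi$ with no error term, and the estimate reduces to a first-order Taylor expansion of $Z(\varphi):=Df(\bar u+\varphi)$: since $Z(0)=0$ and $DZ(0)=D^2f(\bar u)$ is bounded below on $(T_{\bar u}\Xi)^{\perp}$ (proved there directly by a Rellich--Kondrachov contradiction argument rather than by citing Fredholm/spectral theory), one gets $\|Z(\varphi)\|_{W^{-1,2}}\geq\tfrac{\beta}{2}\|\varphi\|_{W^{1,2}}$ for small $\varphi$. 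Uniformity in $\bar u$ then comes from the transitive isometry action on each component of $\Xi$ (with compactness noted as an alternative, as you use). Your route through the chart $\Phi$ is sound but heavier: you purchase an exact quadratic normal form for $f$ that you do not actually use, at the cost of having to compare $\Pi(\Phi(y))$ with $\Phi(y_K)$. Incidentally, that comparison is easier than you indicate: since $\Phi(y_K)\in\Xi$, the trivial bound
\[
\|u-\Pi(u)\|_{W^{1,2}}=\operatorname{dist}_{W^{1,2}}(u,\Xi)\leq\|\Phi(y)-\Phi(y_K)\|_{W^{1,2}}\leq\|D\Phi\|_{\mathrm{op}}\,\|y^{\perp}\|_{W^{1,2}}
\]
already suffices once $\|D\Phi\|_{\mathrm{op}}\leq 2$ on a shrunken $\mathcal V$; no second-order identification of $\Pi$ with $y\mapsto\Phi(y_K)$ is needed.
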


\begin{proof}
Consider, for each $\bar u \in \Xi$, the space
    \[V_{\bar u} = \{ \vphi \in W^{1,2}(S^3) \mid \left\langle \varphi, h \right\rangle_{W^{1,2}(S^3)} = 0, \ \forall h \in \ker D^2f(\bar u)\},\]
namely the orthogonal complement of the tangent space of $\Xi$ at $\bar u$, endowed with the $W^{1,2}(S^3)$ inner product. Define $z \colon V_{\bar u} \to \R$ by
    \[z(\vphi) = f(\bar u + \vphi).\]
Then $z$ is as differentiable as $f$ and its derivative is given by $Dz(\vphi) = Df(\bar u + \vphi)$. Moreover, for any $\xi \in V_{\bar u}$, we may consider $D^2z(\vphi)\xi$ as a linear operator $V_{\bar u} \to \R$ given by
    \[\left\langle D^2 z(\varphi)\xi, \psi \right\rangle = \left\langle D^2f(\bar u+ \varphi) \xi, \psi \right\rangle_{W^{-1,2} \times W^{1,2}} = \int_{S^3} \left\langle \nabla \xi, \nabla \psi \right\rangle + \frac{W''(\bar u+\varphi)}{\e^2} \xi \psi.\]
In particular, $0 \in V_{\bar u}$ is a critical point of $z$ and $D^2z(0) = Df^2(\bar u)$. If we let $Z=Dz\colon V_{\bar u} \subset W^{1,2}(S^3) \to W^{-1,2}(S^3)$, then $Z$ is differentiable, $Z(0) = 0$ and $DZ(0) = D^2f(\bar u)$.\medskip

\noindent\textbf{Claim.} There exists $\beta_{\bar u}>0$ such that $\|DZ(0)\vphi\|_{W^{-1,2}(S^3)} \geq \beta_{\bar u} \|\vphi\|_{W^{1,2}(S^3)}$ for all $\vphi \in V_{\bar u}$.\medskip

\noindent \emph{Proof.} Otherwise, there would exist a sequence $\{\vphi_j\}$ in $V_{\bar u}$ with $\|\vphi_j\|_{W^{1,2}(S^3)}=1$ and $\|DZ(0)\vphi_j\|_{W^{-1,2}(S^3)} \to 0$. By the Rellich-Kondrachov Theorem, there exists $\vphi \in W^{1,2}(S^3)$ such that $\vphi_j \to \vphi$ strongly in $L^2(S^3)$ and weakly in $W^{1,2}(S^3)$. On the other hand, by the self-adjointness of $DZ(0)$ and
    \[|\left\langle DZ(0)\vphi_j, \psi\right\rangle_{W^{-1,2}\times W^{1,2}}| \leq \|DZ(0)\vphi_j\|_{W^{-1,2}(S^3)} \|\psi\|_{W^{1,2}(S^3)} \to 0,\]
we see that $\vphi \in \ker DZ(0) = \ker D^2f(\bar u)$, which is the tangent space to $\Xi$ at $\bar u$. Since $\varphi \in V_{\bar u}$, we see that $\varphi = 0$ and
$\|\nabla \vphi_j\|_{L^2(S^3)} \to 1$. On the other hand, if $c>0$ is such that $W''\geq -c$, then
    \begin{align*}
        \|\nabla \vphi_j\|_{L^2(S^3)}^2 &\leq \frac{c}{\e^2} \|\vphi_j\|_{L^2(S^3)}^2 + \int |\nabla\vphi_j|^2 + \frac{W''(\bar u)}{\e^2}\vphi_j^2 \\
        & \leq \frac{c}{\e^2} \|\vphi_j\|_{L^2(S^3)}^2 + \left\langle DZ(0)\vphi_j,\vphi_j \right\rangle_{W^{-1,2} \times W^{1,2}} \to 0
    \end{align*}
 which yields a contradiction. This proves the claim. \hfill\qed

 Let $\rho_{\bar u}>0$ be such that
    \[\|\vphi\|_{W^{1,2}(S^3)} < \rho_{\bar u} \implies \|Z(\vphi) - DZ(0)\vphi\|_{W^{-1,2}(S^3)} \leq \frac{\beta_{\bar u}}{2}\|\vphi\|_{W^{1,2}(S^3)}.\]
If $\vphi \in V_{\bar u}$ and $\|\vphi\|_{W^{1,2}(S^3)}<\rho_{\bar u}$, we conclude that
\begin{equation*}
\begin{split}
\|Df(\bar u + \vphi)\|_{W^{-1,2}(S^3)} &= \|Z(\vphi)\|_{W^{-1,2}(S^3)} \\
&\geq \|DZ(0)\vphi\|_{{W^{-1,2}(S^3)}} - \frac{\beta_{\bar u}}{2} \|\vphi\|_{W^{1,2}(S^3)}\geq \frac{\beta_{\bar u}}{2}\|\vphi\|_{W^{1,2}(S^3)}.
\end{split}
\end{equation*}

Since the functional $f$ is equivariant with respect to the isometries of $S^3$ (which act transitively in $A_\e$ and $B_\e$), it follows that we may select $\beta_{\bar u}$ independently of $\bar u$, thus concluding the proof.\qedhere
\end{proof}

\begin{rmk}
We note that, alternatively, one may use the compactness of the critical manifolds $A_\e$ and $B_\e$ to obtain a constant $\beta_{\bar u}>0$ that is independent of the critical point $\bar u$, without resorting to the symmetry of $S^3$.
\end{rmk}

\begin{lem} \label{d_comparison}
Let $N_2$ be as in Lemma \ref{lem:est2}. There exists $\delta_2 \in (0,\delta_1]$ such that 
    \[\|Df(u)\|_{W^{-1,2}(S^3)}\leq 2\|Df_{\delta}(u)\|_{W^{-1,2}(S^3)}\]
for all $u \in N_2$ and for all $\delta \in (0,\delta_2)$
\end{lem}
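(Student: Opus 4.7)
The plan is to expand the $W^{-1,2}$-norm squared of $Df_\delta(u) = Df(u) + \delta DG(u)$ (which uses the Hilbert structure on $W^{-1,2}(S^3)$ obtained by identifying functionals with their Riesz representatives in $W^{1,2}(S^3)$), use the near-orthogonality of $Df$ and $DG$ on $N_2$ supplied by Lemma \ref{lem:est1}, and then reabsorb the cross term using the coercivity estimate of Lemma \ref{lem:est2}. Since $N_2 \subset N_1$, on $N_2$ we have the identity $f_\delta = f + \delta G$, hence
\[\|Df_\delta(u)\|_{W^{-1,2}}^2 = \|Df(u)\|_{W^{-1,2}}^2 + 2\delta \langle Df(u), DG(u)\rangle + \delta^2 \|DG(u)\|_{W^{-1,2}}^2,\]
where the pairing $\langle \cdot,\cdot\rangle$ is interpreted as in Lemma \ref{lem:est1} (equivalently, as the $W^{1,2}$-inner product of Riesz representatives).

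Next, I would discard the nonnegative term $\delta^2\|DG(u)\|_{W^{-1,2}}^2$ and apply Lemma \ref{lem:est1} to bound $|\langle Df(u), DG(u)\rangle| \leq C_1 \|u - \Pi(u)\|_{L^2}^2$, which yields
\[\|Df_\delta(u)\|_{W^{-1,2}}^2 \geq \|Df(u)\|_{W^{-1,2}}^2 - 2\delta C_1 \|u - \Pi(u)\|_{L^2}^2.\]
Using $\|u-\Pi(u)\|_{L^2(S^3)} \leq \|u-\Pi(u)\|_{W^{1,2}(S^3)}$ together with Lemma \ref{lem:est2} gives $\|u-\Pi(u)\|_{L^2}^2 \leq \tau^{-2}\|Df(u)\|_{W^{-1,2}}^2$, so that
\[\|Df_\delta(u)\|_{W^{-1,2}}^2 \geq \Bigl(1 - \frac{2\delta C_1}{\tau^2}\Bigr)\|Df(u)\|_{W^{-1,2}}^2.\]
Choosing any $\delta_2 \in (0,\delta_1]$ with $\delta_2 < 3\tau^2/(8C_1)$ makes the factor in parentheses at least $1/4$, yielding $\|Df(u)\|_{W^{-1,2}} \leq 2\|Df_\delta(u)\|_{W^{-1,2}}$ for all $u \in N_2$ and $\delta \in (0,\delta_2)$.

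Essentially no new obstacle arises here; the work was already done in Lemmas \ref{lem:est1} and \ref{lem:est2}. The only subtlety is the mild identification of the $W^{-1,2}$ inner product with the pairing in Lemma \ref{lem:est1}, which is what that lemma is set up to compute, and the observation that one does not need any uniform bound on $\|DG(u)\|_{W^{-1,2}}$ itself, since the $\delta^2$-term has a favorable sign and can simply be dropped.
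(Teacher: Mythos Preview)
Your proof is correct and follows essentially the same route as the paper: both arguments use Lemma~\ref{lem:est1} to control the cross term and Lemma~\ref{lem:est2} to reabsorb it into $\|Df(u)\|_{W^{-1,2}}^2$. The only cosmetic difference is that the paper applies Cauchy--Schwarz to $\langle Df(u), Df_\delta(u)\rangle_{W^{-1,2}}$ and then expands, rather than expanding $\|Df_\delta(u)\|_{W^{-1,2}}^2$ directly; this avoids the factor of $2$ in front of the cross term and the need to drop the $\delta^2\|DG(u)\|^2$ term, but the two computations are interchangeable and lead to the same conclusion with only a slightly different threshold for $\delta_2$.
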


\begin{proof}
Let $\delta_2 = \min\{1/(2C_1\tau^2),\delta_1\}$. For simplicity, write $\|\cdot\|=\|\cdot\|_{W^{-1,2}(S^3)}$. Then
    \begin{align*}
        \|Df_\delta(u)\| \|Df(u)\| & \geq \left| \left\langle Df(u), Df_\delta(u) \right\rangle_{W^{-1,2}(S^3)}\right|\\
        & = \left| \|Df(u)\|^2 + \delta \left\langle Df(u), DG(u) \right\rangle_{W^{-1,2}(S^3)} \right|\\
        & \geq \|Df(u)\|^2 - C_1\delta \|u-\Pi(u)\|^2_{W^{1,2}(S^3)} \qquad\quad \text{(By Lemma \ref{lem:est1})}\\
        & \geq \|Df(u)\|^2 - C_1 \tau^2 \delta \|Df(u)\|^2 \qquad\qquad \qquad \text{(By Lemma \ref{lem:est2})}\\
        & \geq  \frac{1}{2}\|Df(u)\|^2
    \end{align*}
for all $u \in N_1$ and $\delta \in (0,\delta_2)$ as claimed.
\end{proof}

\begin{proof}[Proof of \ref{main}]
For every $\delta>0$ such that $\delta<\min\{\delta_0,\delta_2\}$ (where $\delta_0$ is given by Theorem \ref{flow1}), consider the corresponding entire solutions $\{u_\delta(t)\}_{t \in \R}$ of the semiflow generated by the autonomous equation $\partial_t u = -Df_\delta(u)$ in $X^\alpha=W^{2\alpha,2}(S^3) \subset W^{1,2}(S^3)$ for $\alpha\in (3/4,1)$. 

By the boundedness of solutions $u_\delta$ in $X^\alpha$ and by Theorem \ref{contdependence}, there exists an entire $W^{1,2}$-solution $u \colon \R \to W^{1,2}(S^3)$ of $\partial_t u = -Df(u)$ in $S^3$ such that, for any $T>0$,
    \[\sup_{t \in [-T,T]}\|u_\delta(t) - u(t)\|_{W^{1,2}(S^3)} \to 0 \quad \text{as} \quad \delta\downarrow 0.\]

We may also assume, by translating the solutions $u_\delta(t)$ in time, that $f_\delta(u_\delta(0))$ does not converge to $\omega_1(f)$ or $\omega_5(f)$. By parabolic regularity (see e.g. \cite[Example 3.6]{Henry}), we conclude that $u(t)$ is a nonconstant solution of \eqref{PAC} in $S^3$. Moreover, since $u_\delta(t) \to a_0$ in $W^{1,2}$ and $f_\delta(u_\delta(t))$ is strictly decreasing, as $t \to +\infty$, we have that $f_\delta(u_\delta(t)) \geq f_\delta(a_0) = f(a_0) + \delta G(a_0) = f(a_0)$ for all $t \in \R$, hence $f(u(t))$ is bounded from below away from $0$. Furthermore, we have $f_\delta(u_\delta(t)) < f_\delta(b_0)$ for all $t$ and hence $f(u(t)) \leq f(b_0) = \omega_5(\frac{1}{\e}E_\e)$ for all $t \in \R$. Therefore, Proposition \ref{compact_flow} implies that there exist an Allen-Cahn approximation $u^{-\infty} \in B_\e$ of a Clifford torus and a ground state solution $u^{+\infty} \in A_\e$ such that $\|u(\pm t)-u^{\pm \infty}\|_{W^{1,2}(S^3)} \to 0$ as $t \to \infty$. It suffices then to show that $u^{-\infty} = \pm b_0$ and $u^{+\infty} = \pm a_0$. We will prove the latter here, the proof of the former being completely analogous.\medskip

First, we note that $f_\delta(u_\delta(t))$ is continuous at $(t,\delta) = (+\infty,0)$, that is\medskip

\noindent\textbf{Claim.} For any $\eta>0$, there exist $\bar T>0$ and $\bar \delta \in (0,\delta_2)$ such that
    \[f_\delta(u_\delta(t)) - f_\delta(\pm a_0) < \eta, \qquad \text{for all} \qquad t \geq \bar T, \delta \in (0,\bar \delta).\]

\noindent\emph{Proof of the claim.} The proof follows closely \cite[Lemma 4]{CG2023}. If the claim does not hold, then there is $\eta>0$, a subsequence $\delta_j \downarrow 0$ and a sequence of real numbers $t_j \uparrow +\infty$ such that
    \[f_{\delta_j}(u_{\delta_j}(t_j)) - f_{\delta_j}(\pm a_0) \geq \eta \qquad \text{for all} \ j.\]
We have $f_\delta(\pm a_0) = f(\pm a_0) + \delta G(\pm a_0)$, so there exists $\delta'>0$ such that
    \[f_\delta(\pm a_0) > f(\pm a_0) -\frac{\eta}{3} \qquad \text{for all} \ \delta \in (0,\delta').\]
Let $T'>0$ be such that
    \[f(u(t)) < f(u^{+\infty}) + \frac{\eta}{3} = f(\pm a_0) + \frac{\eta}{3}, \qquad \text{for all} \ t \geq T'.\]
For this fixed $T'$, using $u_\delta(T') \to u(T')$ (in $W^{1,2}(S^3)$), we see that there exists $J \in \N$ such that
    \[f_{\delta_j}(u_{\delta_j}(T')) < f(u(T')) + \frac{\eta}{3}, \qquad \text{for all} \ j \geq J.\]
By taking a further subsequence, we may assume $t_J \geq T'$ and $\delta_J < \delta'$. But then, by the monotonicity of the energy along the flow, we get
    \begin{align*}
        f_{\delta_J}(\pm a_0) + \eta & \leq f_{\delta_J}(u_{\delta_J}(t_J)) \leq f_{\delta_J}(u_{\delta_J}(T'))\\
        & < f(u(T')) +\frac{\eta}{3} < \left(f(\pm a_0) + \frac{\eta}{3} \right) + \frac{\eta}{3} < \left(f_{\delta_J}(\pm a_0) + \frac{\eta}{3} \right) + \frac{2\eta}{3}.
    \end{align*}
which is a contradiction. \hfill\qed\bigskip

We are ready to show that $u^{+\infty}\in \{\pm a_0\}$. Suppose otherwise and let $\eta = G(\pm a_0) - G(u^{+\infty})$, which is positive by the choice of the Morse perturbation. Then there exist $\bar\delta>0$ and $T>0$ such that:
\begin{enumerate}
    \item[(i)] $|G(u(t)) - G(u^{+\infty})| < \eta/4$, for all $t \geq T$; and
    \item[(ii)] $f_\delta(u_\delta(t)) - f_\delta(\pm a_0) < \tau^2 \eta/(8C)$, for all $t \geq T$ and all $\delta \in (0,\bar \delta)$,
\end{enumerate}
where $C>0$ and $\tau$ are given by Lemmas \ref{lem:est1} and \ref{lem:est2}. By the convergence of $\{u_\delta(t)\}$ to $\{u(t)\}$ in compact intervals, we may also assume $|G(u_\delta(T)) - G(u(T))| < \eta/4$ for $\delta \in (0,\bar \delta)$. We estimate, for any $t \geq T$,
\begin{align*}
\int^t_{T} \frac{d}{ds} G(u_{\delta}(s)) ds & = \int_T^t \left\langle DG(u_\delta(s)), -Df_\delta(u_\delta(s))\right\rangle_{W^{-1,2}\times W^{1,2}}\,ds \\
& = \int_T^t \left[ - \left\langle Df(u_\delta(s)), DG(u_\delta(s)) \right\rangle_{W^{-1,2}\times W^{1,2}} - \delta\|DG(u_\delta(s)\|_{W^{-1,2}}^2 \right]\,ds\\
&\leq C \int_T^t\|u_\delta(s) - \Pi(u_\delta(s))\|_{W^{1,2}(S^3)}^2\,ds \qquad \qquad \qquad \text{(by Lemma \ref{lem:est1})}\\
&\leq \frac{C}{\tau^2} \int_T^t \|Df(u_\delta(s)\|_{W^{-1,2}}^2\,ds \qquad \qquad \qquad \qquad \qquad \text{(by Lemma \ref{lem:est2})}\\
&\leq \frac{2C}{\tau^2} \int_T^t \|Df_\delta(u_\delta(s))\|_{W^{-1,2}}^2\,ds \qquad \qquad \qquad \qquad \quad \text{(by Lemma \ref{d_comparison})}\\
&= \frac{2C}{\tau^2} \int_T^t \left(-\frac{d}{ds} f_\delta(u_\delta(s))\right)\,ds = \frac{2C}{\tau^2}\left(f_\delta(u_\delta(T)) - f_\delta(u_\delta(t)) \right)\\
&<\frac{2C}{\tau^2}(f_\delta(u_\delta(T)) - f_\delta(\pm a_0)) < \frac{\eta}{4}
\end{align*}
Hence, we conclude that
\begin{align*}
    G(u_\delta(t)) & < G(u_\delta(T)) + \frac{\eta}{4} < G(u(T)) + \frac{\eta}{2} < G(u^{+\infty}) + \frac{3\eta}{4} = G(a_0) - \frac{\eta}{4}
\end{align*}
which contradicts to the fact that $u_{\delta}(t) \to \pm a_0$ as $t \to +\infty$.

After possibly replacing the solution $u(t)$ by $(-u(t))$, this proves that there is an entire solution of \eqref{PAC} that converges in $W^{1,2}$ to $b_0$, as $t \to -\infty$, and to either $a_0$ or $(-a_0)$, as $t \to +\infty$. Since $b_0$ is invariant by the antipodal isometry $i(x)=-x$ in $S^3$, this finishes the proof of the \ref{main}, as we may replace $u(t)$ with $u(t)\circ i$ if necessary,
\end{proof}

We conclude this section with a formula for the derivative of the perturbed function $f_\delta$ that describes the nonlocal nonlinearity introduced by the perturbation term $\delta \chi(u)\cdot G(u)$. This will be used in the next section to describe the negative gradient (semi)flow of $f_\delta$.

\begin{lem} \label{lem:derivative}
There exist Lipschitz maps $\eta\colon W^{1,2}(S^3) \to W^{1,2}(S^3)$ and $\theta \colon W^{1,2}(S^3) \to \R$ such that
    \[\left\langle Df_\delta(u),v\right\rangle_{W^{-1,2}\times W^{1,2}} = (1+\delta\theta(u))\int_{S^3}\langle \nabla u,\nabla v \rangle + \frac{1}{\e^2}
    \int_{S^3}W'(u)v + \delta \chi(u) \int_{S^3} \eta(u)v.\]
\end{lem}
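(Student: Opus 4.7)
The proof is a direct computation using the Leibniz rule applied to the decomposition $f_\delta = f + \delta \chi G$ from Proposition \ref{perturb}. I would begin by writing
\[\langle Df_\delta(u),v\rangle = \langle Df(u),v\rangle + \delta G(u)\langle D\chi(u),v\rangle + \delta\chi(u)\langle DG(u),v\rangle,\]
observing that integration by parts yields $\langle Df(u),v\rangle = \int_{S^3}\langle\nabla u,\nabla v\rangle + \tfrac{1}{\e^2}\int_{S^3}W'(u)v$, which already provides the first two terms of the claimed identity. The task then reduces to rewriting the two remaining perturbation contributions in the form $\delta\theta(u)\int\langle\nabla u,\nabla v\rangle + \delta\chi(u)\int\eta(u)v$.

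For the term $\chi(u)\langle DG(u),v\rangle$ I would reuse the computation from the proof of Lemma \ref{lem:est1}. Since $G = g\circ\Pi$ on the support of $\chi$ and $D\Pi(u)$ is (by Lemma \ref{critical}) the $W^{1,2}$-orthogonal projection onto $T_{\Pi(u)}\Xi = \ker D^2 f(\Pi(u))$, any $W^{1,2}$-orthonormal basis $\{v_j\}$ of this kernel gives
\[\langle DG(u),v\rangle = \langle \psi(u),v\rangle_{W^{1,2}}, \qquad \psi(u) := \sum_{j}(Dg(\Pi(u))v_j)\,v_j \ \in \ \ker D^2 f(\Pi(u)).\]
Since $\psi(u)$ weakly satisfies the linearized Allen-Cahn equation $\Delta\psi(u) = W''(\Pi(u))\psi(u)/\e^2$, integrating by parts once in $\langle\psi(u),v\rangle_{W^{1,2}} = \int\langle\nabla\psi(u),\nabla v\rangle + \int \psi(u)\,v$ converts this $W^{1,2}$-pairing to a pure $L^2$-pairing against the $W^{1,2}$-function $\eta_1(u) := \left(1 - W''(\Pi(u))/\e^2\right)\psi(u)$.

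For the term $G(u)\langle D\chi(u),v\rangle$ I would fix an explicit choice of the cutoff, namely $\chi(u) = \tilde\chi(h(u))$ with $h(u) := \|u-\Pi(u)\|_{W^{1,2}}^2$ and $\tilde\chi \in C^\infty_c([0,\infty))$ equal to $1$ near $0$. The key observation is that $u-\Pi(u)$ is $W^{1,2}$-orthogonal to $\operatorname{range} D\Pi(u) = T_{\Pi(u)}\Xi$, so $\langle D\chi(u),v\rangle = 2\tilde\chi'(h(u))\langle u-\Pi(u),v\rangle_{W^{1,2}}$. Expanding this $W^{1,2}$-pairing and integrating by parts in the $\nabla\Pi(u)$ piece using $\Delta\Pi(u) = W'(\Pi(u))/\e^2$ (which holds weakly because $\Pi(u)\in\crit f$) yields
\[\langle D\chi(u),v\rangle = 2\tilde\chi'(h(u))\int_{S^3}\langle\nabla u,\nabla v\rangle + 2\tilde\chi'(h(u))\int_{S^3}\left[\tfrac{W'(\Pi(u))}{\e^2} + u - \Pi(u)\right]v.\]
The first piece, multiplied by $\delta G(u)$, is absorbed into the coefficient of $\int\langle\nabla u,\nabla v\rangle$ by setting $\theta(u) := 2 G(u)\tilde\chi'(h(u))$; the second combines with the $DG$ contribution to give a single $L^2$-pairing $\delta\int_{S^3} R(u)\,v$ with $R(u)\in W^{1,2}(S^3)$ supported in $\supp \chi$.

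The final step is to verify the regularity claims. That $\theta\colon W^{1,2}(S^3)\to\R$ is Lipschitz follows from the smoothness of $\Pi$ on the tubular neighborhood $N_0$ (Lemma \ref{critical}), compactness of $\Xi$, smoothness of $g$ and $\tilde\chi$, and the boundedness of all factors involved; the analogous argument, combined with the smooth dependence of the basis $\{v_j\}$ on the point $\Pi(u)\in\Xi$ and the polynomial bounds on $W',W''$, shows that $R\colon W^{1,2}(S^3)\to W^{1,2}(S^3)$ is Lipschitz as well. The main technical point, and the principal obstacle, is to factor $R = \chi\cdot\eta$ with a \emph{globally} Lipschitz $\eta$: the expression for $\eta$ contains the ratio $\tilde\chi'(h(u))/\chi(u)$, whose boundedness across $\partial\{\chi>0\}$ is not automatic for a generic compactly supported bump. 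This is handled by a careful tailoring of $\tilde\chi$ — for instance, choosing $\tilde\chi$ so that $\tilde\chi'$ vanishes to sufficiently high order relative to $\tilde\chi$ at the boundary of $\supp\tilde\chi$ — or, equivalently, by observing that the only property actually needed for the abstract parabolic theory of Theorem \ref{contdependence} is that the product $\chi\cdot\eta = R$ be Lipschitz, which we have already established.
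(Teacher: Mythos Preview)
Your approach is essentially the same as the paper's. Both proofs apply the Leibniz rule to $f_\delta = f + \delta\,\chi G$, choose the cutoff $\chi$ as a smooth function of the squared $W^{1,2}$-distance to $\Xi$, compute $D\chi$ by differentiating this distance and using $\Delta\Pi(u) = W'(\Pi(u))/\e^2$, and compute $DG$ via the projection formula together with the linearized Allen--Cahn equation satisfied by elements of $T_{\Pi(u)}\Xi$; the resulting $\theta$ and the $L^2$-integrand agree with the paper's up to notation.

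Your discussion of the factorization $R = \chi\cdot\eta$ is a genuine subtle point: the paper's own computation in fact produces an integrand of the form $G(u)\bar\chi'(\rho(u))\eta_1(u) + \chi(u)(\cdots)$, which is not manifestly of the form $\chi(u)\cdot\eta(u)$ with Lipschitz $\eta$ unless one controls $\bar\chi'/\bar\chi$ near the boundary of $\supp\bar\chi$. Your resolution---either tailor $\bar\chi$ so that $\bar\chi'/\bar\chi$ stays bounded, or simply note that downstream (equation \eqref{semiflow equation} and the long-time existence argument) only the Lipschitz product $\chi\eta$ enters---is exactly right and arguably makes this point more explicit than the paper does.
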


\begin{proof}
For simplicity we will write $\langle L,v\rangle$ for the natural pairing between $L \in W^{-1,2}(S^3)$ and $v \in W^{1,2}(S^3)$. First, we note that $Df(u) = D\left( \frac{1}{\e} E_\e\right)(u)$ can be written as $Df(u)v = -\Delta v + \frac{1}{\e^2}W'(u)v$, so it suffices to compute the derivative of $u \mapsto \chi(u) \cdot G(u)$. Let
    \[\rho(u) = \frac{1}{2}\mathrm{dist}_{W^{1,2}(S^3)}(u,\Xi)^2=\frac{1}{2}\|u - \Pi(u)\|_{W^{1,2}(S^3)}^2.\]
Note that $\rho$ is smooth on a tubular neighborhood of $\Xi$ and its derivative is given by
    \begin{align*}
        \left\langle D\rho(u),v \right\rangle & = \left\langle u-\Pi(u),v \right\rangle_{W^{1,2}(S^3)} = \int_{S^3}\langle \nabla (u-\Pi(u)), \nabla v \rangle+ (u-\Pi(u))v,
    \end{align*}
which means we can write $D\rho$ as $D\rho = -\Delta +\eta_1$, where $\eta_1$ is a bounded and Lipschitz map (with respect to the Sobolev norm) defined on a neighborhood of $\Xi$ by
    \[\eta_1(u) = u-\Pi(u)+\Delta(\Pi(u)) = u-\Pi(u) + \frac{W'(\Pi(u))}{\e^2},\]
where the last equality is a result of the fact that $\Pi(u)$ is a critical point of $f$. By the proof of Proposition \ref{perturb}, we may assume that $\chi(u) = \bar \chi(\rho(u))$ for a smooth function $\rho \in C^\infty(\R)$ that is $\bar\chi\equiv 1$ in $(-\infty,r']$ and $\bar\chi \equiv 0$ in $[r,+\infty)$. Then
    \[D\chi(u) = \bar\chi'(\rho(u))D\rho(u) = \bar\chi'(\rho(u))(-\Delta u + \eta_1(u))\]
and we can compute
    \[D(\bar \chi G)(u) = -\theta(u)\Delta(u) + \eta(u),\]
where $\theta(u) = G(u) \bar\chi'(\rho(u))$ and
    \begin{align*}
        \eta(u) &= G(u)\bar\chi'(\rho(u))\eta_1(u) + \chi(u)DG(u)\\
        & = G(u)\bar\chi'(\rho(u))\eta_1(u) + \chi(u)\left(1-\frac{1}{\e^2}W''(\Pi(u))\right) \nabla^{W^{1,2}}g(\Pi(u)),
    \end{align*}
Here we used the computation of $DG(u)$ from the proof of Lemma \ref{lem:est1} and the fact that functions in $T_{\Pi(u)}\Xi$ are solutions of the linearized Allen-Cahn equation at $\Pi(u)$, and denoted by $\nabla^{W^{1,2}}g$ the gradient of the Morse function $g \colon \Xi \to \R$ with respect to the $W^{1,2}(S^3)$ inner product.
\end{proof}

\section{topological argument} \label{sec: topological argument}

The primary objective of this section is to construct appropriate eternal solutions of the negative $L^2$-gradient flow of the perturbations $f_\delta$ of the Allen-Cahn energy $f_0=\frac{1}{\e}E_\e$, thus establishing the proof of Theorem \ref{flow1}.  We will use topological tools to study this flow using the variational theory introduced in Subsections \ref{subsec:minmax}, \ref{subsec:morse}, and \ref{subsec:orbits}. We continue to write $\hc = W^{1,2}(S^3)$.

We begin by proving some useful lemmas about the min-max values of $f_\delta$. First, we compare $p$-sweepouts, even maps into $\hc$, and homology classes in $H_{*}(\tilde{\hc},\Z_2)$, where $\tilde{\hc}$ is the quotient of $\hc \setminus \{0\}$ by the (free) $\Z_2$ action given by the antipodal map. For a $\mathbb{Z}_2$ equivariant map $h: Q \to \hc \setminus \{0\}$, we will denote by $\widetilde{h}: \tilde Q \to \widetilde{\hc}$ the induced map, where $\tilde Q$ is the quotient space obtained from $Q$ by the corresponding $\Z_2$ action.

\begin{lem} \label{lem:sweep-homology}
Let $Q \subset [-1,1]^N$, $N \in \mathbb{N}$, be a symmetric $p$-dimensional cubical subcomplex, and let $\gamma: Q \to \hc \setminus \{0\}$ be an even continuous map. Then $\ind_{\mathbb{Z}_2}(Q) \geq p$ if, and only if, there exists a symmetric $p$-dim subcomplex $K \subset Q$ such that $\partial K = 0$ and $[\widetilde{\gamma |_K}] \neq 0$ in $H_p(\tilde{\hc}, \mathbb{Z}_2)$.
\end{lem}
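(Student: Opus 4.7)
My plan is to interpret both sides of the equivalence through the pullback under $\tilde\gamma$ of the generator $w \in H^1(\tilde{\hc}; \Z_2)$. Since $\hc \setminus \{0\}$ is contractible and carries a free antipodal $\Z_2$-action, the quotient $\tilde{\hc}$ has the homotopy type of $K(\Z_2,1) \simeq \R P^\infty$, so $H^*(\tilde{\hc};\Z_2) \cong \Z_2[w]$ and $H_k(\tilde{\hc};\Z_2) \cong \Z_2$ for every $k \geq 0$. Moreover, $\tilde\gamma$ itself may be used as the classifying map for the double cover $Q \to \tilde Q$ in the definition of $\ind_{\Z_2}(Q)$. Setting $\alpha := \tilde\gamma^*(w) \in H^1(\tilde Q;\Z_2)$, the index condition translates to $\ind_{\Z_2}(Q) \geq p$ if and only if $\alpha^{p-1} \neq 0$ in $H^{p-1}(\tilde Q;\Z_2)$.

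For the $(\Leftarrow)$ direction, I would start with a symmetric $p$-dimensional subcomplex $K \subset Q$ with $\partial K = 0$ and $[\widetilde{\gamma|_K}] \neq 0$. Pairing with the generator $w^p$ of $H^p(\tilde{\hc};\Z_2)$ via the Kronecker pairing yields $\widetilde{\gamma|_K}^*(w^p) \neq 0$ in $H^p(\tilde K;\Z_2)$. Setting $\beta := \widetilde{\gamma|_K}^*(w)$ and using the ring structure on cohomology, $\beta^p \neq 0$ forces $\beta^k \neq 0$ for all $0 \leq k \leq p$ (otherwise $\beta^p = \beta^{p-k}\cdot \beta^k$ would also vanish). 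In particular $\widetilde{\gamma|_K}^*(w^{p-1}) = \beta^{p-1} \neq 0$, so $\ind_{\Z_2}(K) \geq p$, and by monotonicity of the $\Z_2$-index under equivariant inclusions one concludes $\ind_{\Z_2}(Q) \geq \ind_{\Z_2}(K) \geq p$.

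For the $(\Rightarrow)$ direction, I would first observe that, since $\Z_2$ acts freely on the positive-dimensional cells of $Q$, a symmetric $p$-dimensional subcomplex $K \subset Q$ with $\partial K = 0$ is precisely the data of a mod-$2$ cellular $p$-cycle in the cubical quotient $\tilde Q$, and every class in $H_p(\tilde Q;\Z_2)$ is represented by such a $[\tilde K]$ because $\tilde Q$ is $p$-dimensional and thus has no $(p+1)$-chains. Under this correspondence, $[\widetilde{\gamma|_K}]$ equals $\tilde\gamma_*[\tilde K]$, so producing the desired $K$ is equivalent to the map $\tilde\gamma_* : H_p(\tilde Q;\Z_2) \to H_p(\tilde{\hc};\Z_2) = \Z_2$ being nonzero, which by Kronecker duality is equivalent to $\alpha^p \neq 0$ in $H^p(\tilde Q;\Z_2)$.

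The main obstacle is then to establish the implication $\alpha^{p-1} \neq 0 \Rightarrow \alpha^p \neq 0$ for the $p$-dimensional complex $\tilde Q$ (the reverse implication being immediate from $\alpha \cdot \alpha^{p-1} = \alpha^p$). I would approach this by cellular approximation: $\tilde\gamma$ may be homotoped to factor through the $p$-skeleton $\R P^p \subset \R P^\infty$, and the non-vanishing of $\alpha^{p-1}$ obstructs a further deformation into the $(p-2)$-skeleton. Working with the resulting cellular map and using that $H^p(\R P^p;\Z_2) = \Z_2$ is generated by $w^p$, I expect to reduce the claim to identifying a top $p$-cell of $\tilde Q$ on which the cellular approximation hits the top cell of $\R P^p$ with odd degree. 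This step, which is in the spirit of the sweepout arguments of Marques--Neves \cite{MarquesNevesWillmore} and Gaspar--Guaraco \cite{GasparGuaraco}, is the one I anticipate will require the most care.
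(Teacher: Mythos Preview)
Your reduction of both sides to cohomological conditions via the Kronecker pairing is exactly the approach the paper takes, and your $(\Leftarrow)$ direction is correct. The genuine problem lies in the $(\Rightarrow)$ direction: the implication $\alpha^{p-1}\neq 0 \Rightarrow \alpha^p \neq 0$ that you flag as the ``main obstacle'' is in fact \emph{false} for general $p$-dimensional symmetric cubical complexes, so no cellular-approximation argument will rescue it. A concrete counterexample is $Q = \partial([-1,1]^p)\times[-1,1]\subset[-1,1]^{p+1}$ with the antipodal action: the quotient $\tilde Q$ is an interval bundle over $\R P^{p-1}$, hence homotopy equivalent to $\R P^{p-1}$, so $\alpha^{p-1}\neq 0$ while $H^p(\tilde Q;\Z_2)=0$ forces $\alpha^p = 0$. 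In particular the cellular approximation of $\tilde\gamma$ already factors through $\R P^{p-1}$ and never hits a top $p$-cell, so the degree argument you sketch cannot get started.

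The discrepancy is not a flaw in your reasoning but an off-by-one inconsistency in the paper. In its proof the paper simply \emph{declares} that $\ind_{\Z_2}(Q)\geq p$ is equivalent to $\tilde\gamma^*\lambda^p\neq 0$, whereas the definition of $\ind_{\Z_2}$ given in Section~\ref{subsec:minmax} makes it equivalent to $\tilde\gamma^*\lambda^{p-1}\neq 0$; the applications in Section~\ref{sec: topological argument} (where a nontrivial $H_5$ class is used to conclude that a set is a $5$-sweepout, i.e.\ lies in $\mathcal F_5$, i.e.\ has index $\geq 6$) confirm that the intended hypothesis is $\ind_{\Z_2}(Q)\geq p+1$. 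With that correction your obstacle disappears: both sides of the equivalence become $\alpha^p\neq 0$, and your argument --- the same Kronecker-duality computation the paper carries out, invoking a representation of $H_p(\tilde Q;\Z_2)$ by subcomplexes --- goes through immediately.
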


\begin{proof}
Recall that $\tilde{\hc}$ is a $K(\Z_2,1)$-Eilenberg--MacLane space, whose $\Z_2$-cohomology ring is generated by some $\lambda \in H^{1} (\tilde{\hc}, \mathbb{Z}_2)$. Thus, we see that $\ind_{\Z_2}(Q) \geq p$ if, and only if, $ \widetilde{\gamma}^* \lambda^p \neq 0$ in $H^p(\widetilde{Q}, \Z_2)$, that is, if and only if there exists $\tau \in H_p(\widetilde{Q}, \Z_2)$ such that $\left\langle\widetilde{\gamma}^* \lambda^p, \tau\right\rangle \neq 0$. By \cite[Chapter 6]{Massey}, the class $\tau$ can be represented by $[\tilde K]$, where $K$ is a symmetric subcomplex of $Q$ with $\partial K = 0$. Denoting by $i_K \colon K \to Q$ the inclusion map, we see that
\[\left\langle\widetilde{\gamma}^*\lambda^p, [\widetilde{K}]\right\rangle = \left\langle\widetilde{\gamma}^*\lambda^p, (i_{\widetilde{K}})_* [\widetilde{K}]\right\rangle = \left\langle i_{\widetilde{K}}^* \widetilde{\gamma}^* \lambda^p, [\widetilde{K}]\right\rangle = \left\langle\lambda^p, (\widetilde{\gamma |_{K}})_*[\widetilde{K}]\right\rangle,\]
proving the claimed result.
\end{proof}

For the next lemma, we use the notation introduced in the previous Section in the construction of $f_\delta$. We can characterize its first five widths using the classification result, Theorem \ref{lowenergy}, and the choice of the Morse function along the critical manifolds $A_\e$ and $B_\e$:

\begin{lem}
For sufficiently small $\delta > 0$, we have:
\begin{enumerate}
    \item[(i)] the lowest five critical levels of $f_\delta$ are $0=f_\delta(\pm 1)$, and the min-max levels $\omega_1(f_\delta)<\omega_2(f_\delta)<\omega_3(f_\delta)<\omega_4(f_\delta)$. In addition, for $p=1,2,3,4$, the critical levels $\crit f_\delta \cap f_\delta^{-1}(\omega_p(f_\delta))$ are precisely the (pairs of) critical points of the Morse function $g_1$ along the space of ground states, in increasing order of energy;
    \item[(ii)] $\crit f_{\delta} \cap f_{\delta}^{-1}(\omega_5(f_{\delta})) = \{\pm b_0\}$; and
    \item [(iii)] $\crit f_{\delta} \cap f_\delta^{-1}(\omega_4(f_{\delta}), \omega_5(f_{\delta})) = \emptyset$.
\end{enumerate}
In particular, $\omega_4(f_\delta)=f_\delta(a_0) = f(a_0) = \omega_4(f)$, $\omega_5(f_\delta) = f_\delta(b_0) = f(b_0) = \omega_5(f)$, and $\ind(u,f_\delta) \leq 3$ for any $u \in \crit f_\delta$ with $f_\delta(u)<\omega_4(f_\delta)$.
\end{lem}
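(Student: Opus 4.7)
The plan is to combine the convergence $\omega_p(f_\delta)\to\omega_p(f)$ with a complete enumeration of the critical points of $f_\delta$ in the relevant energy range, and then read off each width from the min-max index characterization for Morse functionals. I would fix $\delta$ small enough that $c_0\delta$ is dominated by a priori constants (the gap $f(b_0)-f(a_0)$, the isolation radius of the Morse--Bott critical manifolds, etc.).

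By Proposition~\ref{perturb}(iv), $\|f-f_\delta\|_{C^0}\le c_0\delta$, so Lemma~\ref{lem:convwidth} gives $\omega_p(f_\delta)\to\omega_p(f)$; by Theorem~\ref{lowenergy} and Proposition~\ref{energy-mb}, $\omega_1(f)=\cdots=\omega_4(f)=f(a_0)$ and $\omega_5(f)=f(b_0)>f(a_0)$. Next I would classify all critical points of $f_\delta$ at levels $\le f(b_0)+c_0\delta$. Inside the tubular neighborhood $N_0$ of $A_\e\cup B_\e$, Proposition~\ref{perturb}(iii) identifies them with $\crit g_1\cup \crit g_2$: each $x_i\in \crit g_1$ contributes a pair at level $f(a_0)+\delta g_1(x_i)\le f(a_0)$ with $f_\delta$-index $1+\ind(x_i,g_1)$, and each $y_j\in \crit g_2$ contributes a pair at level $f(b_0)+\delta g_2(y_j)\ge f(b_0)$ with $f_\delta$-index $5+\ind(y_j,g_2)$. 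Outside $N_0$ we have $f_\delta=f$, and the Palais--Smale condition for $f$ together with the Morse--Bott isolation of $\{\pm 1\}, A_\e, B_\e$ (Proposition~\ref{energy-mb}) and the classification of Theorem~\ref{lowenergy} force the only other critical points of $f_\delta$ in this energy range to be the constants $\pm 1$. This inventory proves (iii) at once: the critical values of $f_\delta$ in $[0,f(b_0)+c_0\delta]$ lie in $\{0\}\cup[f(a_0)-\delta\|g_1\|_\infty,f(a_0)]\cup[f(b_0),f(b_0)+\delta\|g_2\|_\infty]$, which misses the open interval $(f(a_0),f(b_0))$.

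Since $f_\delta$ is Morse on this region, the min-max characterization recalled in Subsection~\ref{subsec:minmax} (following \cite{Ghoussoub}) asserts that each $\omega_p(f_\delta)$ is a critical value realized at a critical point of $f_\delta$-index exactly $p$ (nullity vanishes). From the inventory and the choice of $g_1,g_2$ (where the four critical pairs of $g_1$ have pairwise distinct values occurring in the same order as their Morse indices, and $\pm b_0$ are the only minima of $g_2$): for each $p\in\{1,2,3,4\}$ the unique $f_\delta$-index-$p$ pair at these levels is the one on $A_\e$ with $g_1$-index $p-1$, while for $p=5$ only $\pm b_0$ qualifies --- the remaining pairs of $\crit g_2$ have $f_\delta$-index $\ge 6$, and the pairs on $A_\e$ have $f_\delta$-index $\le 4$. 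The strict monotonicity $\omega_1<\omega_2<\omega_3<\omega_4<\omega_5$ follows from a standard nondegeneracy argument: if $\omega_p=\omega_{p+1}$ then the common value would have to be attained by a critical point realizing two distinct indices, impossible in the Morse case. Matching widths in increasing order with the available pairs then yields (i) and (ii); in particular $\omega_4(f_\delta)=f(a_0)$ and $\omega_5(f_\delta)=f(b_0)$.

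The step I expect to be most delicate is the min-max input used at the start of the third paragraph --- that each $\omega_p(f_\delta)$ is realized by a Morse critical point of $f_\delta$-index exactly $p$. This is an instance of the equivariant Ghoussoub--Palais theory for even Palais--Smale functionals and proceeds through a deformation/cohomological-index argument rather than a bare algebraic count. Once this is granted, the very simple structure of the critical-point inventory (one pair per index, at pairwise distinct values in the relevant range) makes the final matching between widths and critical pairs unambiguous.
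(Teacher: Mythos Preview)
Your argument is correct and reaches the same conclusions, but the mechanism you use to pin down the widths differs from the paper's. The paper first runs a compactness/contradiction argument (sequences $\delta_i\downarrow 0$, Palais--Smale, and the Remark in \S\ref{subsec:orbits}) to force nonconstant critical points of $f_\delta$ with $f_\delta\le\omega_4(f_\delta)$ into the tubular neighborhood $N_2$, and then invokes the Lusternik--Schnirelmann--type multiplicity statement \cite[Theorem~10.8]{Ghoussoub}: since the critical set at each level is a finite antipodal pair (hence has $\Z_2$-index $1$), the widths must be strictly increasing. Parts (ii) and (iii) are handled by analogous sequence arguments. By contrast, you carry out a direct inventory of $\crit f_\delta$ in the range $[0,f(b_0)+c_0\delta]$ and then use the Morse-case index identity $\ind(u,f_\delta)=p$ for a critical point realizing $\omega_p(f_\delta)$ to match each width with the unique pair of the correct index.

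What each buys: your route is more explicit --- it immediately identifies \emph{which} pair achieves each $\omega_p$ and gives $\omega_4(f_\delta)=f(a_0)$, $\omega_5(f_\delta)=f(b_0)$ without a separate step --- but it leans on the specific choice of $g_1$ (perfect Morse function on $S^3$, one pair per index, values ordered by index), which the paper's example satisfies but which is not forced by the hypotheses on $g_1$ alone. The paper's route via Ghoussoub's multiplicity result needs only that the critical sets are discrete pairs, so it is slightly more robust in the choice of $g_1$, though it is less informative about the index-to-width correspondence. One small point you should make explicit is that $f_\delta$ inherits the Palais--Smale property (needed for the min-max input): this follows since $f_\delta-f=\delta\chi G$ is a $C^1$-bounded perturbation supported in $N_0$, but it is worth a sentence.
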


\begin{proof}
We claim that there exists $\delta_3 > 0$ such that, for any $\delta \in (0,\delta_3)$, any nonconstant critical point $u$ of $f_\delta$ with $f_\delta(u) \leq \omega_4(f_\delta)$ is in the tubular neighborhood $N_2$ of $\Xi=A_\e \cup B_\e$. Otherwise, we can find a sequence $\delta_i \downarrow 0$ and $u_i \in \crit f_{\delta_i}$ with $f_{\delta_i}(u_i) \leq \omega_4(f_{\delta_i})$ such that $u_i \notin N_2$. By the Remark in Subsection \ref{subsec:orbits}, $\{u_i\}$ subconverges to some $u \in \crit f$ with $f(u) = \lim f_{\delta_i}(u_i) \leq \omega_4(f)$, where we used Lemma \ref{lem:convwidth}. By Theorem \ref{lowenergy}, we get $u \in A_\e \cup \{\pm 1\}$. Since $u_i$ are nonconstant, $f=f_\delta$ in a neighborhood of the constant functions $\pm 1$ and these are isolated critical points of $f$, we see that $u \in A_\e$. This implies $u_i \in N_2$ for large $i$ and we reach a contradiction.

Consequently, we get $\{ u \in \crit f_\delta \mid f_\delta(u) \leq \omega_4(f_\delta)\} \subset \{\pm 1\} \cup (\crit g_1)$ for $\delta \in (0,\delta_3)$, and $f_\delta$ has at most five critical levels. Since $\crit g_1$ is discrete, by \cite[Theorem 10.8]{Ghoussoub} we see that
\begin{align*}
\omega_1(f_{\delta}) < \omega_2(f_{\delta}) < \omega_3(f_{\delta}) < \omega_4(f_{\delta}).
\end{align*}
This proves (i), and implies $\omega_4(f_\delta) = \max f_\delta(\crit g_1) = f_\delta(a_0)=f(a_0)$, in addition to
    \[\ind(f_\delta,u) = 1 + \ind(g_1,u) \leq 3, \quad \text{for any} \ u \in \crit f_\delta \ \text{with} \ u \neq \pm a_0.\]
A similar argument (invoking the characterization of the fifth min-max critical level $\omega_5(f)$ in Theorem \ref{lowenergy}) shows that, for sufficiently small $\delta$, the functions $\pm b_0$ are the only critical points of $f_{\delta}$ with $f_\delta = \omega_5(f_\delta)$.

Finally, if $u_i \in \crit f_{\delta}$, $\delta_i \downarrow 0$, is a sequence with $\omega_4(f_{\delta_i}) < f_{\delta_i}(u_i) < \omega_5(f_{\delta}))$, then $u_i$ subconverges to some $u \in \crit f$ with $\omega_4(f) \leq f(u) \leq \omega_5(f)$. The rigidity of min-max values guarantees that $(\crit f) \cap f^{-1}(\omega_4(f), \omega_5(f)) = \emptyset$, hence $f(u)=\omega_p(f)$ for $p=4$ or $p=5$, and, by the argument above, $u_i \in N_1$ for large $i$. Since this implies $u_i \in \crit g_1$ or $u_i = \pm b_0$, we conclude that such a sequence of critical points of $f_{\delta_i}$ cannot exist, proving (iii).
\end{proof}

For the remainder of this section, we will write $\omega_p =\omega_p(f) = \omega_p(f_\delta)$, for $p=4,5$. By the results of R. Palais (see Section \ref{subsec:morse}), there exists $r \in (0,\omega_5 - \omega_4)$ such that the sublevel $(f_{\delta})^{\omega_5 + r}=\{f_\delta \leq \omega_5 +r\} \cap \hc$ is $C^2$ isomorphic to the sublevel $(f_{\delta})^{\omega_5-r}$ with two handles of type $(5, \infty)$ disjointly attached. Moreover, the same sublevel $(f_{\delta})^{\omega_5 + r}$ has
\[(f_{\delta})^{\omega_5 - r} \cup (\phi^+(D_0) \cup \phi^-(D_0))\]
as a deformation retract, where $D_0 = D^5 \times \{0\}$ and $\phi^{\pm}$ are the handle-attaching maps. By the excision theorem, it follows that
\[H_{*} ((f_{\delta})^{\omega_5 + r}, (f_{\delta})^{\omega_5 - r};\Z_2) \simeq H_{*}((f_{\delta})^{\omega_5 - r} \cup \phi^+(D_0) \cup \phi^-(D_0), (f_{\delta})^{\omega_5 - r};\Z_2)\]
has the homology classes of $\phi^{\pm}(D_0)$ as generators. From now on, we will write $\Omega:= \phi^+(D_0) \cup \phi^-(D_0)$.

It follows from Lemma \ref{lem:sweep-homology} and \cite[Lemma 6.2]{GasparGuaraco} that we can find a pulled-tight sweepout $A \in \mathcal{F}_5$ such that $A$ is the image of a $5$-dimensional cubical complex $K \subset [-1,1]^N$ with $\partial K = 0$ (by an odd continuous map into $\hc\setminus \{0\}$), and $\sup f(A) \leq \omega_5(f) + r$. We claim that $\widetilde{A}$ is homologous to the quotient $\tilde \Omega$ of $\Omega$ by the antipodal map, relative to the quotient of the sublevel $(f_{\delta})^{\omega_5 - r}$.   

To see this, note that $H_5(\widetilde{(f_{\delta})^{\omega_5+r}}, \widetilde{(f_{\delta})^{\omega_5-r}} / \sim;\Z_2)$ is generated by $[\widetilde{\Omega}]$. If $[\widetilde{A}] \neq [\widetilde{\Omega}]$, then the relative class $[\widetilde{A}]$ vanishes, that is, there exist a $5$-cycle $A_{\partial}$ in $(f_{\delta})^{\omega_5 - r}$ and a $6$-cycle $Z$ in $(f_{\delta})^{\omega_5 + r}$ such that $\partial Z = \tilde A + \tilde A_{\partial}$. Consequently $A_{\partial}$ is $\Z_2$-homologous to $A$ . By Lemma \ref{lem:sweep-homology}, this shows that (the support of) $A_{\partial}$ is a $5$-sweepout, thus 
\[\omega_5(f_{\delta}) \leq \sup f_{\delta}(A_{\partial}) \leq \omega_5(f_{\delta}) - r,\]
which is a contradiction. For the record, this implies that there exist a $5$-cycle $A_{\partial}$ in $(f_{\delta})^{\omega_5 - r}$ and a $6$-chain $Z_1$ in $(f_{\delta})^{\omega_5 + r}$ such that
    \begin{equation} \label{hom1}
        \partial Z_1 = A + A_\partial + \Omega.
    \end{equation}
    
Now consider the local unstable manifolds $\mathcal{W}^u(\pm b_0) \subset X^\alpha= W^{2\alpha,2}(S^3)$ for the stationary points $\pm b_0$ of the $L^2$-gradient semiflow of $-Df_\delta$, namely the semiflow described by the quasilinear autonomous equation
\begin{equation} \label{semiflow equation}
\frac{\mathrm{d}u}{\mathrm{d}t} = -Df_\delta(u) = (1+\delta\theta(u))\Delta u -\frac{W'(u)}{\e^2} - \delta\chi(u)\eta(u),
\end{equation}
where we use the notation from Proposition \ref{perturb} and the computation of the derivative of the functional $f_\delta$ derived in Lemma \ref{lem:derivative}. We remark that $\theta(u)$ vanishes on a neighborhood of the critical manifolds $A_\e$ and $B_\e$, and hence the equation is semilinear near the critical points $\pm b_0$.
By the unstable manifold Theorem -- see \cite[Theorem 5.2.1]{Henry} --, there is a local parametrization of the unstable manifold around $\pm b_0$ by $5$-dimensional disks $D_\pm \subset \mathcal{W}^u(\pm b_0)$ which are graphs over disks in the negative eigenspaces of $D^2f(\pm b_0)$. In particular, by shrinking $r$ and the disks in the description of the attaching maps $\phi^{\pm}$ if necessary, we can assume that $\Omega$ and $(D_{+}+D_-)$ are relatively homologous in $((f_\delta)^{\omega_5+r},(f_\delta)^{\omega_5-r})$. This means that there exist a symmetric $5$-chain $D_\partial$ in $(f_\delta)^{\omega_5-r}$ and a symmetric $6$-chain $Z_2$ in $(f_{\delta})^{\omega_5 + r}$ such that
    \begin{equation} \label{hom2}
        \partial Z_2 = (D_+ + D_-) + \Omega + D_{\partial}.
    \end{equation}

Let $\Psi$ be the semiflow generated by the evolution equation \ref{semiflow equation}. As noted in Theorem \ref{contdependence}, $\Psi$ is a Lipschitz map in an open subset of $\R_{\geq 0} \times X^\alpha$ into $X^\alpha$. The restriction of this map to $R_{\geq 0} \times \mathcal{W}^u(\pm b_0)$ can be extended to every $t<0$, as the solution of \ref{semiflow equation} with initial condition $y$ in the unstable manifold is defined for all $t<0$. Furthermore, 

\noindent \textbf{Claim.} If $y \in D_\pm$, then $t \mapsto \Psi(t,y)$ is defined for all $t \in \R$ and $\Psi(t,y)$ converges to a critical point of $f_\delta$ in $X^\alpha$ as $t \to +\infty$.

\begin{proof}[Proof of the claim]
It follows from \cite{Amann} that $u(t):=\Psi(t,y)$ is a solution of the integral equation
\[u(t) = U_{P(u(t))}(t,0)y - \int_0^tU_{P(u(t))}(t,s)\left(\frac{1}{\e^2}W'(u(s)) + \delta\chi(u(s))\eta(u(s))\right)\,ds,\]
where $U_{P(u(t))}$ denotes a \emph{parabolic fundamental solution} for the family of operators $\{P(u(t))\}_{t \in [0,T]}$ which are given, for any $T < t^+(y)$, by
    \[P(u(t))=-(1+\delta \theta(u(t)))\Delta,\]
as operators with domain $\mathcal{D}(P(u(t)) = W^{2,2}(S^3)$ into $L^2(S^3)$. We note that we may assume $D_{\pm}$ is contained in a sufficiently small $W^{2\alpha,2}(S^3)$-neighborhood of $\pm b_0$ so that, by the Sobolev embedding, $\|y\|_{L^\infty(S^3)}<1$ for any $y \in D_{\pm}$. Since $\theta \equiv 0$ and $\eta \equiv 0$ outside of a $W^{1,2}(S^3)$-neighborhood of $\Xi$, we see (using the maximum principle for \eqref{PAC}) that $\Psi(t,y)$ is bounded in $W^{1,2}(S^3)$ in its domain of existence. Furthermore, since $W'$ grows linearly (by the asumption (Q2)) and $\delta\chi(u)\eta(u)$ is Lipschitz in $W^{1,2}(S^3)$, we conclude that $\left\|{\textstyle\frac{1}{\e^2}}W'(u(t)) + \eta(u(t))\right\|_{L^2(S^3)}$ is bounded in the domain of $u$.

Now we can use \cite[Theorem 5.1]{Amann} and argue as in Corollary 3.3.5 \cite{Henry} to show that $\|u(t)\|_{X^\alpha}$ is bounded. The long time existence follows then from Theorem \ref{existence} (replacing $\alpha$ and $\alpha'$ by $\frac{1}{2}$ and $\alpha$, respectively). 
\end{proof}

For each $t \geq 0$, write
\begin{align*}
    D_{\pm}^t &= \{\Psi(t,y) \mid y \in D_{\pm}\} , \qquad & S_{\pm}^t & = \{\Psi(t,y) \mid y \in \partial D_{\pm}\}\\
    \widetilde{D}^t & = \widetilde{D_{\pm}^t}, \qquad & \widetilde{S^t} & = \widetilde{S_{\pm}^t},
\end{align*}
By the Claim above and by Theorem \ref{existence}, these are well defined. Moreover, since $f_\delta$ is a Morse functional below the critical level $\omega_5(f_\delta)$, for each $a \in D_{\pm}$, the orbit $\psi^t (a)$ converges, as $t \to + \infty$, to either a critical point of the Morse function $g_1$ along $A_\e$, or to $\pm 1$.

\begin{proof}[Proof of Theorem \ref{flow1}]

It suffices to show that there exists $a \in \partial (D_{\pm})$ such that $\Psi(t,a) \to \pm a_0$ as $t \to  \infty$. We argue by contradiction, assuming that there does not exist $a \in \partial D_{\pm}$ such that $\Psi(t,a) \to \pm a_0$ as $t \to  \infty$.

Let $l \in (\omega_3(f_{\delta}), \omega_4(f_{\delta}))$. Then, for any $a \in \partial (D_{\pm})$ there exist $T_a > 0$ such that 
\[f_\delta\left(\Psi(t,a)\right) <l,\quad \text{for all} \quad t \geq T_a.\]
By the compactness of $\partial D_{\pm}$ and the continuity of $\Psi$, it follows that there exists $T_l > 0$ such that $S_{\pm}^t \subset (f_{\delta})^l$, for all $t \geq T_l$.

Since all critical points in $(f_\delta)^l$ have index $\leq 3$, by \cite{Palais}, we get $H_4(\widetilde{(f_{\delta})^l};\Z_2) = 0$, and there exists a $5$-cycle $\tilde B_0$ in $\widetilde{(f_{\delta})^l}$ such that $\partial \widetilde{B_0} = \widetilde{S}^{T_l}$. If we let $B_0 \subset (f_\delta)^l \subset (f_\delta)^{\omega_4}$ be the lift of $\widetilde{B_0}$ to $\hc$, then we see that $B := D_+^{T_l} \cup D_-^{T_l} \cup B_0$ is a $5$-cycle in $(f_\delta)^{\omega_5}$ and 
\[\ind_{\Z_2}(B) \leq \ind_{\Z_2} (B_0) + 1 \leq \ind_{\Z_2} (\{f_{\delta} < l\}) + 1\leq 5.\]
where we used that $D_+^t \cup D_-^t$ has $\ind_{\Z_2} = 1$ for $t \in [0,T_l]$ (we may assume that the local unstable manifolds of $\pm b_0$ are disjoint). In conclusion, $B \notin \mathcal{F}_5$ (recall that the sets in $\mathcal{F}_5$ have index $\geq 6$).  By Lemma \ref{lem:sweep-homology}, this means that there exists a $6$-chain $Z_3$ in $(f_{\delta})^{\omega_5 + r}$ such that 
    \begin{equation} \label{hom3}
    \partial Z_3 = B  = (D_+ + D_-) + C + B_0,
    \end{equation}
where we denote by $C$ the $5$-chain $(D_+^{T_l} - D_+) + (D_-^{T_l} - D_-)$ supported in $(f_\delta)^{\omega_5-r}$, so that $B = (D_+ + D_-) + C + B_0$.

By combining \eqref{hom1}, \eqref{hom2}, and \eqref{hom3}, we see that the $5$-cycle
\[K := A_{\partial} + D_{\partial} + C + B_0\]
satisfies
\[\partial (Z_1 + Z_2 + Z_3) = A + K,\]
namely, $A$ and $K$ are $\Z_2$-homologous in $\omega_5(f)+r$. Using Lemma \ref{lem:sweep-homology} once again, we conclude that $K$ is a $5$-sweepout (recall that $A \in \mathcal{F}_5$). But this yields a contradiction, as $K$ is supported in $(f_{\delta})^{\omega_5 - r}$.
\end{proof}

\bibliographystyle{acm}{
\bibliography{main}}

\end{document}